\newtheorem{theorem}{Theorem}[section]
\newtheorem{proposition}[theorem]{Proposition}
\newtheorem{lemma}[theorem]{Lemma}
\theoremstyle{definition}
\newtheorem{definition}[theorem]{Definition}
\newtheorem{example}[theorem]{Example}
\theoremstyle{remark}
\newtheorem{remark}[theorem]{Remark}
\numberwithin{equation}{section}
\newcommand{\norm}[1]{\left\lVert#1\right\rVert}
\newcommand{\R}{\mathbb{R}}
\newcommand{\D}{\mathbb{D}}
\newcommand{\ve}{\varepsilon}
\definecolor{darkgreen}{RGB}{0,153,0}
\definecolor{darkred}{RGB}{204,0,0}
\definecolor{darkblue}{RGB}{0,51,204}
\definecolor{red}{RGB}{242,43,29}
\begin{document}

\title{Folded symplectic forms in contact topology}
\author{Joseph Breen}
\address{University of Iowa, Iowa City, IA 52240}
\email{joseph-breen-1@uiowa.edu.edu} \urladdr{https://sites.google.com/view/joseph-breen/home}
\thanks{The author was partially supported by NSF Grant DMS-2038103.}

\begin{abstract}
    We establish the relationship between folded symplectic forms and convex hypersurface theory in contact topology. As an application, we use convex hypersurface theory to reprove and strengthen the existence result for folded symplectic forms due to Cannas da Silva, and we generalize to all even dimensions Baykur's $4$-dimensional existence result of folded Weinstein structures and folded Lefschetz fibrations. 
\end{abstract}

\maketitle

\tableofcontents

\section{Introduction}

Let $\Sigma$ be a closed and oriented manifold of dimension $2n\geq 2$. We are interested in two structures associated to such a manifold, the first being that of a \textit{folded symplectic form}, and the second being the structure induced by viewing $\Sigma$ as a \textit{convex hypersurface} embedded in a contact manifold. 

\begin{definition}\label{def:folded_form}
A \textbf{folded symplectic form} is a closed $2$-form $\omega\in \Omega^2(\Sigma)$ such that
\begin{enumerate}
    \item $\omega^n \in \bigwedge^{2n} T^*\Sigma$ intersects the $0$-section transversally along the \textbf{fold} $\Gamma := \{\omega^n = 0\}$, and 
    \item $\iota^*\omega$ has maximal rank, where $\iota:\Gamma\hookrightarrow \Sigma$ is the inclusion. 
\end{enumerate}
The tuple $(\Sigma, \omega)$ is a \textbf{folded symplectic manifold}. We call $R_{\pm}:= \{\pm \omega^n >0\}$ the \textbf{positive region} and \textbf{negative region}, respectively.  
\end{definition}

Informally, a folded symplectic form is a $2$-form that degenerates as mildly as possible along a codimension-$1$ hypersurface, and is otherwise (anti)symplectic. Indeed, $(R_{+}, \omega\mid_{R_{+}})$ and $(-R_{-}, \omega\mid_{R_{-}})$ are open symplectic manifolds, where $-R_-$ denotes $R_-$ equipped with the opposite orientation that it inherits from $\Sigma$. 

\begin{definition}\label{def:convex}
A co-orientable hypersurface  $\iota: \Sigma \hookrightarrow (M^{2n+1}, \xi = \ker \alpha)$ in a contact manifold is \textbf{convex} if there is a contact vector field $X$, i.e. a vector field satisfying $\mathcal{L}_X\xi = \xi$, everywhere transverse to $\Sigma$. We call $R_{\pm} := \{\pm\alpha(X) > 0\}\subset \Sigma$ the \textbf{positive region} and \textbf{negative region}, respectively, and we call the codimension-$1$ contact-type submanifold $\Gamma := \{\alpha(X) = 0\}\subset \Sigma$ the \textbf{dividing set}.    
\end{definition}

Convex hypersurfaces form a generic class of hypersurfaces in contact manifolds. Since $(M, \xi=\ker \alpha)$ is co-oriented, the co-orientation given by $X$ induces an orientation on $\Sigma$. Importantly, $R_+$ is an exact symplectic manifold with a primitive given by an appropriate positive rescaling of $\iota^*\alpha\mid_{R_+}$, and $(-1)^{n+1}R_-$ is likewise exact symplectic via an appropriate negative rescaling of $\iota^*\alpha\mid_{R_-}$. Both $\overline{R}_+$ and $(-1)^{n+1}\overline{R}_-$ are symplectic fillings of the contact dividing set $\Gamma$. 

\subsection{Context}\label{subsec:context}

The goal of this note is to establish the connection between folded symplectic forms and convex hypersurfaces. Before stating our main results, \cref{theorem:main0}, \cref{theorem:main}, and \cref{thm:lefschetz}, we include a brief discussion of each theory for context. 

\subsubsection{Folded symplectic forms}\label{subsec:context_folded}

Folded symplectic forms were first introduced by Martinet in his thesis \cite{Martinet1970}, where he proved a Darboux theorem; namely, that around any point $p\in \Gamma$ there are local coordinates such that $p=0$, the fold is given by $\Gamma = \{y_1=0\}$, and\footnote{Here we are locally orienting $\Sigma$ via the top wedge power of $\sum_{j=1}^n dx_j \wedge dy_j$, so that $R_+ = \{y_1 > 0\}$.}
\begin{equation}
\omega = y_1\, dx_1\wedge dy_1 + \sum_{j=2}^n dx_j \wedge dy_j.   
\end{equation}
Later, folded symplectic forms were thoroughly investigated in a series of papers by Cannas da Silva, Guillemin, Pires, and Woodward \cite{cannas2000unfolding,cannas2010foldedfour,cannas2011origami}. Cannas da Silva \cite{cannas2010foldedfour} established existence in all even dimensions, in particular showing that folded symplectic forms satisfy an existence $h$-principle. The sufficient formal data on $\Sigma$ necessary for existence is that of a stable almost complex structure, i.e., a complex vector bundle structure on $T\Sigma \oplus \underline{\ve}^2$, where $\underline{\ve}^2 \to \Sigma$ is the trivial rank-$2$ vector bundle. Stable almost complex structures, and hence folded symplectic forms, exist on all closed and oriented $4$-manifolds. 

In \cite{baykur2006kahler}, Baykur upgraded the existence of folded symplectic forms on $4$-manifolds to that of, what we will call in this paper, \textit{folded Weinstein structures}.\footnote{Baykur worked with Stein structures. We will work directly with the symplectic counterpart; see \cite{cieliebak2012stein}.} Briefly, a folded Weinstein manifold is an \textit{exact} folded symplectic manifold where the positive and negative regions are each Weinstein manifolds inducing the same contact structure on the fold. A precise definition is given in \cref{def:folded_weinstein}; see also \cref{remark:history} for other uses of \textit{folded Weinstein} language in the literature. 

We summarize the status (prior to this paper) of existence of folded symplectic structures as follows. 

\begin{theorem}\label{theorem:old}
Let $\Sigma$ be a closed and oriented manifold of dimension $2n\geq 2$. 
\begin{enumerate}
    \item \cite{cannas2010foldedfour} If $\Sigma$ admits a stable almost complex structure, then $\Sigma$ admits a folded symplectic structure. In particular, if $2n=4$, then $\Sigma$ admits a folded symplectic structure.
    \item \cite{baykur2006kahler} If $2n=4$, then $\Sigma$ admits a folded Weinstein structure. 
\end{enumerate}
\end{theorem}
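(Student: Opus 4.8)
The plan is to derive both parts of \cref{theorem:old} from the correspondence between folded symplectic manifolds and convex hypersurfaces recorded in \cref{theorem:main0} and \cref{theorem:main} below, together with standard $h$-principles. As the discussion above already suggests, the data of a convex structure on $\Sigma$ --- two symplectic fillings of a common contact manifold, glued along a collar of the fold --- is essentially the data of a folded symplectic form, so it suffices to realize $\Sigma$ as a convex hypersurface with fillings of the required type. For part (1) one may also bypass contact topology and argue Cannas da Silva's $h$-principle for folded forms \cite{cannas2010foldedfour} directly on $\Sigma$.

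For \cref{theorem:old}(1), the first step is the bundle-theoretic observation that a stable almost complex structure on $\Sigma^{2n}$ is equivalent to an almost contact structure on the \emph{open} manifold $\Sigma\times\R$: from a complex structure on $T\Sigma\oplus\underline{\ve}^2$ one splits off a trivial complex line and then cancels a trivial real line --- both moves lying in the stable range, since $\dim\Sigma=2n$ --- to obtain $T(\Sigma\times\R)=T\Sigma\oplus\underline{\ve}\cong\underline{\ve}\oplus V$ with $V$ complex of rank $n$. Gromov's $h$-principle for contact structures on open manifolds then upgrades this to a genuine contact structure on $\Sigma\times\R$ inducing the given formal structure, and the $C^0$-genericity of convex hypersurfaces (Giroux in dimension $3$, Honda--Huang in general) makes $\Sigma=\Sigma\times\{0\}$ convex after a $C^0$-small isotopy. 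The correspondence of \cref{theorem:main0} and \cref{theorem:main} now endows $\Sigma$ with a folded symplectic form whose fold is the dividing set. When $2n=4$ this applies to every $\Sigma$, because every closed oriented $4$-manifold is $\mathrm{Spin}^c$, equivalently stably almost complex: $w_2$ is characteristic for the intersection form and hence admits an integral lift. Cannas da Silva's route instead cuts $\Sigma$ along a regular level $\Gamma$ of a Morse function, installs a symplectic form on the positive piece and an anti-symplectic form on the negative piece in the homotopy classes dictated by the stable almost complex structure (Gromov's $h$-principle on those open manifolds), interpolates across a collar of $\Gamma$ using Martinet's normal form, and corrects cohomology classes so that the global $2$-form is closed; the reason one requires a \emph{stable}, rather than honest, almost complex structure is precisely that this is what forces the two sides to match formally along the collar, where one real direction is spent.

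For \cref{theorem:old}(2), when $2n=4$ we must additionally arrange that the positive and negative regions are Weinstein domains (in the sense of \cite{cieliebak2012stein}) inducing the \emph{same} contact structure on the fold, so that the result is a folded Weinstein structure as in \cref{def:folded_weinstein}. Fix a handle decomposition of $\Sigma$ and split it into the subhandlebody $W_+$ built from the handles of index $\leq n$ and the complementary handlebody $W_-$; turned upside down, $W_-$ has handles of index $\leq n-1$ and so is canonically a subcritical Weinstein domain, while $W_+$, a handlebody of index $\leq n$, admits a Weinstein structure (Eliashberg's theory of Stein surfaces and its refinement by Gompf when $2n=4$; Cieliebak--Eliashberg in higher dimensions, which is what permits the generalization to all $2n$). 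The step we expect to be the main obstacle is to arrange that the contact structures these two Weinstein domains induce on the common boundary $\Gamma$ coincide, so that they glue across a folded collar into a folded Weinstein manifold --- equivalently, in the convex picture, that the contact structure on $\Sigma\times\R$ and the convexifying isotopy are chosen so that \emph{both} $R_+$ and $(-1)^{n+1}R_-$ are Weinstein rather than merely symplectic. In dimension $2n=4$ this matching is exactly what Baykur carries out \cite{baykur2006kahler}; in general, the flexibility available on the open manifold $\Sigma\times\R$ --- overtwisted contact structures, loose Legendrians, flexible Weinstein fillings --- is what supplies the room to adjust the two sides until they agree.
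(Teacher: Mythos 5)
This statement is a summary of prior results (Cannas da Silva, Baykur) that the paper cites rather than proves; the paper's actual re-derivation of it is via \cref{theorem:main}, and your main route --- stable almost complex structure $\Rightarrow$ almost contact structure on $\Sigma\times\R$ $\Rightarrow$ Gromov's $h$-principle $\Rightarrow$ Honda--Huang Weinstein convexity $\Rightarrow$ folded (Weinstein) structure via \cref{lemma:CHTtoFolded} --- is exactly the paper's argument, including the observation that every closed oriented $4$-manifold is stably almost complex. Your sketch of Cannas da Silva's original cut-and-interpolate proof also matches the paper's description of it in \cref{subsec:main}.

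Two caveats on your treatment of part (2). First, the step you flag as ``the main obstacle'' --- arranging that the two Weinstein domains induce the same contact structure on the fold --- is not an obstacle at all in the convex picture: by definition the dividing set of a convex hypersurface is a contact submanifold and both $\overline{R}_+$ and $(-1)^{n+1}\overline{R}_-$ are ideal Liouville fillings of the \emph{same} $(\Gamma,\xi_\Gamma)$ (\cref{prop:CHT_ideal}), so the matching is automatic. The entire weight of the argument rests on \cref{theorem:hh19}, which already produces a \emph{Weinstein} convex perturbation; no appeal to overtwistedness, loose Legendrians, or flexible fillings is needed or made. Second, your alternative handle-decomposition route for $2n=4$ has a genuine gap: a $4$-dimensional handlebody with handles of index $\leq 2$ does \emph{not} automatically admit a Weinstein (Stein) structure --- by Eliashberg--Gompf the $2$-handles must be attached along Legendrians with framing $tb-1$, and an arbitrary decomposition of $\Sigma$ need not satisfy this. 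This is precisely why Baykur's $4$-dimensional argument is delicate and why the paper's convex-hypersurface route, which sidesteps the issue entirely, is presented as a genuinely different technique.
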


Folded symplectic manifolds have been studied with toric actions \cite{hockensmith2015classification,lee2015folded}, have been studied dynamically \cite{cardona2021integrable}, and have even been probed by pseudoholomorphic curves in dimension $4$ \cite{vonBergmann2007pseudoholomorphic}. They have also been studied in terms of their relationship with other singular symplectic manifolds, such as $b$-symplectic manifolds \cite{Guillemin_2014,guillemin2017desingularizing}, and, recently, odd-dimensional counterparts (folded contact forms and $b$-contact forms) have been introduced in the literature \cite{miranda2023contact, cardona2023existence}.

\subsubsection{Convex hypersurface theory}\label{subsec:context_convex}

Convex hypersurface theory was introduced by Giroux \cite{giroux1991convexite}, who established a $C^{\infty}$-genericity statement in dimension $3$, among other foundational results. In the following decades, convex surface theory was used to great effect in $3$-dimensional contact topology, for instance in the classification of tight contact structures \cite{honda2000classification,honda2000classification2}. In higher dimensions the theory poses more technical difficulties, and it was not until recently that a $C^0$-genericity result was first established by Honda and Huang. In fact, they proved that convex hypersurfaces with \textit{Weinstein} positive and negative regions are $C^0$-generic. 

\begin{theorem}[\cite{honda2019convex}, see also \cite{eliashberg2022hondahuangs}]\label{theorem:hh19}
Let $\Sigma \hookrightarrow (M^{2n+1}, \xi)$ be a closed and co-oriented hypersurface in a co-oriented contact manifold. Then there is a $C^0$-small perturbation of $\Sigma$ so that it is convex and such that $R_+$ and $(-1)^{n+1}R_-$ are exact symplectic manifolds that naturally inherit Weinstein structures.     
\end{theorem}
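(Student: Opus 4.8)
The plan is to follow the architecture of Honda--Huang's proof (see also Eliashberg's subsequent streamlining \cite{eliashberg2022hondahuangs}), which is lengthy; I will describe its skeleton rather than its internal machinery. The first move is to recall that convexity is a property of the \emph{germ} of $\xi$ along $\Sigma$: the hypersurface is convex precisely when this germ is $\R$-invariant, equivalently when an appropriate rescaling of $\iota^*\alpha\mid_{R_+}$ is the Liouville form of an ideal Liouville domain structure on $R_+$ (and symmetrically on $(-1)^{n+1}R_-$), with the completion of that structure matching the characteristic foliation near $\Gamma$. So the task reduces to two sub-goals: (i) $C^0$-perturb $\Sigma$ so that $R_+$ (with the rescaled $\iota^*\alpha$) becomes an ideal Liouville domain, and (ii) arrange that this Liouville structure is in fact Weinstein.

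First I would set up a handle-by-handle induction. Choose a handle decomposition of $\Sigma$ together with a compatible Morse function; a generic $C^\infty$-perturbation makes the characteristic foliation have only nondegenerate zeros and be Morse--Smale. The essential point -- and the reason the result is only $C^0$-generic in dimensions $2n\geq 4$, in contrast to Giroux's $C^\infty$-genericity of convex surfaces in dimension $3$ -- is that Morse--Smale-ness of the characteristic foliation no longer implies convexity: one must in addition \emph{manufacture} the transverse Liouville data, which mere perturbation to generic position will not produce. This is supplied by flexibility, not by genericity.

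The core of the argument is the inductive step: over each handle of $\Sigma$ one brings the contact germ to a standard model by inserting thin ``bypass'' layers (in wrinkling language, by adding many small folds). Such insertions are $C^0$-small, because each layer can be taken arbitrarily thin, but they are necessarily \emph{not} $C^1$-small -- the layers are steep -- which is exactly the source of the $C^0$-versus-$C^\infty$ distinction. One then checks compatibility: the layers inserted over adjacent handles glue, and the local ideal Liouville structures patch to a global one on $R_+$. Because all modifications are performed along the handle decomposition, $\overline{R}_+$ is assembled from standard symplectic handles of the expected indices, hence inherits a Weinstein structure; the same applies to $(-1)^{n+1}\overline{R}_-$, and the two induce the same contact structure on $\Gamma$. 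Alternatively, one can follow Eliashberg's reformulation, which packages this construction as a parametric $h$-principle and deduces it by holonomic approximation with folds.

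I expect the main obstacle to be precisely the step where genericity is insufficient: one must build an ideal Liouville (indeed Weinstein) structure on $R_+$ where a priori there is none, and the only available mechanism is the $C^0$-flexible insertion of bypasses/folds. Making this precise -- keeping the characteristic foliation controlled globally, keeping the cumulative perturbation $C^0$-small, and ensuring the pieces assemble into genuinely Weinstein regions with matching dividing contact structure on $\Gamma$ -- is the technical heart, and is what occupies the bulk of \cite{honda2019convex}.
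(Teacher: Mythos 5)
This statement is not proved in the paper at all: it is imported verbatim as Theorem~\ref{theorem:hh19} with an attribution to \cite{honda2019convex} (and \cite{eliashberg2022hondahuangs}) and is used downstream as a black box in the proof of Theorem~\ref{theorem:main}. So there is no in-paper argument to measure you against; the only fair comparison is with the cited proof itself.

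Judged on that basis, what you have written is an accurate précis of the Honda--Huang strategy --- the reduction of convexity to an $\R$-invariant germ, equivalently an ideal Liouville structure on $R_\pm$ in the sense of Proposition~\ref{prop:CHT_ideal}; the handle-by-handle induction; the insertion of thin folded/bypass layers that are $C^0$- but not $C^1$-small, which is indeed the correct explanation for why one only gets $C^0$-genericity in dimension $2n\geq 4$ --- but it is not a proof. Every load-bearing step is announced rather than carried out: you do not state, let alone establish, the local model to which each handle is brought, the lemma that permits a $C^0$-small insertion of a fold pair realizing a prescribed Weinstein modification, the compatibility of insertions over adjacent handles, or the global $C^0$ bookkeeping. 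You correctly identify these as ``the technical heart'' occupying the bulk of \cite{honda2019convex}, which is an honest self-assessment, but it means the submission is an annotated table of contents for someone else's argument. Given that the paper under review also treats this result as external input, the appropriate conclusion is that your outline is consistent with the literature but cannot be credited as an independent proof; if a proof were actually required here, the gap is the entirety of the fold-insertion machinery.

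One small point worth flagging: the statement as quoted asserts that $R_+$ and $(-1)^{n+1}R_-$ ``naturally inherit Weinstein structures,'' and the precise sense in which this holds is via Proposition~\ref{prop:CHT_ideal} together with the Weinstein convexity of the perturbed hypersurface (gradient-like characteristic foliation for a Morse function $\phi$), i.e., the completed finite-type Weinstein structure on the open regions. Your sketch gestures at this through the handle assembly of $\overline{R}_\pm$, which is the right picture, but you should be explicit that the Morse function is part of the data produced by the perturbation, not an afterthought.
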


Honda and Huang's further work \cite{HH18} and work together with the author \cite{BHH23}, alongside work of others like Eliashberg and Pancholi \cite{eliashberg2022hondahuangs} and Sackel \cite{sackel2019handle}, has since laid the groundwork for the use of convex hypersurface theory in the comparatively unexplored frontier of higher-dimensional contact topology.

\subsection{Main results}

\cref{def:folded_form} and \cref{def:convex} suggest a connection between folded symplectic forms and convex hypersurface theory. However, to the best of the author's knowledge, the direct relationship has not been suggested. For instance, there is no mention of convex hypersurface theory in any of the folded symplectic literature cited in \cref{subsec:context_folded}, nor are folded symplectic forms named in any of the classical or modern literature on convex hypersurface theory cited in \cref{subsec:context_convex}. Convex hypersurface theory is used in the recent papers on singular contact forms \cite{miranda2023contact,cardona2023existence}, but only for the purpose of the singular contact structures; there is no indication that convexity induces a folded symplectic structure on the hypersurface.\footnote{For more commentary on relevant literature, see \cref{remark:history} below for a discussion on Honda and Huang's use of \textit{folded} terminology in \cite{honda2019convex}, and \cref{remark:salamon} for reference to an intrinsic form of convex hypersurface theory described by Salamon based on a series of lectures given by Eliashberg \cite{salamon2022notes}.}

Our first result establishes this relationship explicitly. The precise definition of \textit{positive contact-type Liouville form} used in the statement of the theorem is \cref{def:CTliouville}. 

\begin{theorem}\label{theorem:main0}
Let $\Sigma$ be a closed and oriented manifold of dimension $2n\geq 2$, and $\Gamma\subset \Sigma$ a separating hypersurface. The following statements are equivalent: 
\begin{enumerate}
    \item The manifold $\Sigma$ admits an exact folded symplectic form $\omega$ with positive contact-type Liouville form $\lambda$ and contact fold $(\Gamma, \xi_{\Gamma})$.
    \item There is a contact structure on $\Sigma \times \R$ such that $\Sigma \times \{0\}$ is convex with contact dividing set $(\Gamma, \xi_{\Gamma})$. 
\end{enumerate}
\end{theorem}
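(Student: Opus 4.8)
The plan is to prove the equivalence by constructing each structure explicitly from the other, using the standard local model near the fold/dividing set to glue things together.

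\medskip

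\noindent\textbf{From (1) to (2): building a contact structure on $\Sigma\times\mathbb R$.} Suppose $\omega$ is an exact folded symplectic form with primitive $\lambda$ that is of positive contact-type near $\Gamma$, meaning (by \cref{def:CTliouville}, which I am taking as given) that the Liouville vector field $Z$ dual to $\lambda$ is transverse to $\Gamma$, pointing from $R_-$ to $R_+$, and $\iota^*\lambda$ restricts to a contact form $\alpha_\Gamma$ with $\ker\alpha_\Gamma=\xi_\Gamma$. The guiding principle is that on a convex hypersurface, $\iota^*\alpha$ is (up to rescaling) a Liouville form on $R_\pm$ — so here I want $\iota^*\alpha=\lambda$. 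The natural ansatz is to take on $\Sigma\times\mathbb R_t$ a contact form of the shape $\alpha = f(p,t)\,\lambda + g(p,t)\,dt$ for suitable functions $f,g$, arranged so that $\partial_t$ is a contact vector field (this is automatic once the coefficients are $t$-translation invariant, or one instead builds a genuine contact vector field transverse to the zero slice). On the symplectic region $R_+$ this amounts to the standard ``symplectization-like'' extension $e^t\lambda + \varepsilon\,dt$-type construction, and symmetrically on $R_-$ with the sign $(-1)^{n+1}$ accounted for. The only delicate point is the collar of $\Gamma$: there I use the folded Darboux/contact-type normal form to write $\lambda$ explicitly in coordinates $(s,\text{point of }\Gamma)$ with $s$ the coordinate along $Z$, and then exhibit an explicit contact form on $(\text{collar})\times\mathbb R$ interpolating between the two sides — this is precisely the standard neighborhood of a convex dividing set (the contact structure $\ker(\beta \pm h(s)\,ds + \alpha_\Gamma\cdot(\dots))$ model). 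One checks $\alpha\wedge(d\alpha)^n\neq0$ throughout and that $\partial_t$ (or the constructed field) is contact and transverse to $\Sigma\times\{0\}$, with $\alpha(\partial_t)$ vanishing exactly on $\Gamma\times\{0\}$ and changing sign correctly, so the dividing set is $(\Gamma,\xi_\Gamma)$.

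\medskip

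\noindent\textbf{From (2) to (1): extracting the folded form.} Conversely, suppose $\Sigma\times\{0\}$ is convex in $(\Sigma\times\mathbb R,\xi=\ker\alpha)$ with contact vector field $X$ transverse to the slice and dividing set $(\Gamma,\xi_\Gamma)$. Set $\lambda:=\iota^*\alpha$ where $\iota\colon\Sigma\hookrightarrow\Sigma\times\{0\}$. By the discussion following \cref{def:convex}, on $R_+$ an appropriate positive rescaling $\lambda_+=u_+\lambda$ is a Liouville form (so $d\lambda_+$ is symplectic) and on $R_-$ an appropriate rescaling makes $(-1)^{n+1}R_-$ Liouville; near $\Gamma$ the function $\alpha(X)$ vanishes transversally and $\iota^*\alpha|_\Gamma$ is a contact form for $\xi_\Gamma$. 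The natural candidate for the folded symplectic form is $\omega:=d\lambda$ itself (not the rescaled $d\lambda_\pm$): it is exact and closed by construction, it is symplectic on the interiors of $R_\pm$ with the appropriate orientations, and the transversality of the vanishing of $\alpha(X)$ together with the contact condition on $\Gamma$ is exactly what forces $\omega^n$ to vanish transversally along $\Gamma$ and $\iota_\Gamma^*\omega$ to have maximal rank. This last implication is essentially the content of Giroux's characterization of convexity in terms of the behavior of $d(\iota^*\alpha)$ near the dividing set, and verifying it in coordinates using the convex-neighborhood normal form is the crux of this direction. One then checks that $\lambda=\iota^*\alpha$ is of positive contact-type in the sense of \cref{def:CTliouville} — the Liouville field of $\omega=d\lambda$ is the $\alpha$-dual of $\lambda$, which near $\Gamma$ is transverse to $\Gamma$ with the correct sign because $X$ is transverse to $\Sigma$ — so $(\Sigma,\omega,\lambda)$ is an exact folded symplectic manifold with contact fold $(\Gamma,\xi_\Gamma)$, as required.

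\medskip

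\noindent\textbf{Main obstacle.} Both directions reduce, away from $\Gamma$, to essentially formal ``symplectization'' manipulations; the real work is entirely concentrated in a collar neighborhood of the fold/dividing set, where I must match the two local normal forms — Martinet's folded Darboux model for $\omega$ (enhanced to keep track of the primitive $\lambda$ and its contact-type behavior) on one side, and Giroux's standard convex-neighborhood model on the other. The hard part will be writing down a single explicit model on $(\text{collar of }\Gamma)\times\mathbb R$ that simultaneously (a) is contact, (b) has $\Sigma\times\{0\}$ convex with the prescribed dividing set, and (c) restricts to the prescribed folded form with its prescribed contact-type primitive on the slice — and then checking that the global pieces built over $R_+$, $R_-$, and this collar patch together smoothly into one contact form, which will require a careful choice of cutoff/interpolation functions so the gluing preserves the contact condition. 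I expect to also need a Moser-type or Gray-stability argument to absorb the ambiguity in the various rescalings $u_\pm$ and to see that different choices yield contactomorphic (resp.\ folded-symplectomorphic) results, making the equivalence well-posed at the level of the structures rather than merely the specific models.
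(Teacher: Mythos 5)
Your direction (1) $\Rightarrow$ (2) matches the paper's argument in outline: the paper takes $\alpha = f\,dt+\lambda$ with $f$ a $t$-independent function equal to $\pm1$ away from a collar of $\Gamma$, writes $\lambda=(1-\tau^2)\,\lambda_{\Gamma}$ in the folded normal form of \cref{lemma:fold_normal} on the collar, and reduces the contact condition there to the elementary inequality $2\tau f(\tau)+f'(\tau)(1-\tau^2)>0$, which any increasing $f$ with $f(0)=0$, $f(\pm\ve)=\pm1$, $f'(\pm\ve)=0$ satisfies. Your sketch is compatible with this, although the collar computation you defer is the entire content of that direction.

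The genuine gap is in direction (2) $\Rightarrow$ (1). Your candidate $\omega:=d(\iota^{*}\alpha)=d\beta$, taken without rescaling, is not a folded symplectic form in general, and it fails precisely in the normal form you propose to use to verify it. By \cref{lemma:dividingsetnormal} one may arrange $\alpha=\tau\,dt+\alpha_{\Gamma}$ near the dividing set, with $\alpha_{\Gamma}$ pulled back from $\Gamma$; then $d\beta=d\alpha_{\Gamma}$ on the entire collar $\Gamma\times(-\ve,\ve)_{\tau}$, so $(d\beta)^{n}=(d\alpha_{\Gamma})^{n}=0$ identically on an open neighborhood of $\Gamma$ (a dimension count on $\Gamma^{2n-1}$), rather than vanishing transversally along $\Gamma$. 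Away from the dividing set the situation is also not as you claim: the contact condition of \cref{lemma:contactnbd} controls only the combination $f\,(d\beta)^{n}-n\,df\wedge\beta\wedge(d\beta)^{n-1}$, so $(d\beta)^{n}$ need not be nonvanishing on $R_{\pm}$. The rescalings you propose to absorb afterwards by a Moser-type argument are in fact the substance of the proof: the paper first rescales $\alpha$ by $1/\mu$ so that $df=0$ outside a collar of $\Gamma$ (whence the contact condition does force $\pm(d\beta)^{n}>0$ there) and $f=C\tau$ on a smaller collar, and only then sets $\omega:=d(g\,\beta)$ with $g=e^{-f^{2}}$; the factor $dg=-2fe^{-f^{2}}\,df$ is what produces the transversal vanishing $\omega^{n}=2nf\,e^{-nf^{2}}\,\beta_{\Gamma}\wedge(d\beta_{\Gamma})^{n-1}\wedge d\tau$ across $\Gamma$. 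Without some such choice of conformal factor, your construction does not yield a folded symplectic form, so this direction needs to be reworked along these lines.
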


As an application, we leverage convex hypersurface theory to upgrade the existence statements of folded symplectic structures to that of folded Weinstein structures in all dimensions. Our second main result is the following strengthening of \cref{theorem:old} and is a consequence of \cref{theorem:main0} together with \cref{theorem:hh19}.

\begin{theorem}\label{theorem:main}
Let $\Sigma$ be a closed and oriented manifold of dimension $2n\geq 2$. If $\Sigma$ admits a stable almost complex structure, then $\Sigma$ admits a folded Weinstein structure.    
\end{theorem}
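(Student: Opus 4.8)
The plan is to combine \cref{theorem:main0} with \cref{theorem:hh19}, using the hypothesis of a stable almost complex structure to produce a contact structure on $\Sigma \times \R$ in which $\Sigma \times \{0\}$ sits as a hypersurface, then invoke convexity generically. First I would observe that a stable almost complex structure on $\Sigma$ — a complex vector bundle structure on $T\Sigma \oplus \underline{\ve}^2$ — is exactly the formal data needed to build an almost contact structure on $\Sigma \times \R$: indeed $T(\Sigma \times \R)\oplus \underline{\ve} \cong T\Sigma \oplus \underline{\ve}^2$, so the stable almost complex structure on $\Sigma$ endows $\Sigma \times \R$ (an open manifold of dimension $2n+1$) with an almost contact structure. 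By the Gromov--Eliashberg $h$-principle for contact structures on open manifolds, this almost contact structure is homotopic to a genuine contact structure $\xi$ on $\Sigma \times \R$. (In the closed case $2n+1 = 5$ one could alternatively appeal to Etnyre's existence result, but working on the open manifold $\Sigma \times \R$ lets us stay in the $h$-principle regime in every dimension.)

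Next I would perturb $\Sigma \times \{0\}$ inside $(\Sigma \times \R, \xi)$ to be convex. Since $\Sigma$ is closed and co-oriented (it is oriented, and the normal direction in $\Sigma \times \R$ supplies the co-orientation), \cref{theorem:hh19} applies: after a $C^0$-small perturbation, $\Sigma \times \{0\}$ becomes a convex hypersurface whose positive region $R_+$ and negative region $(-1)^{n+1} R_-$ are exact symplectic manifolds carrying Weinstein structures, and whose dividing set $\Gamma$ is a contact-type submanifold $(\Gamma, \xi_\Gamma)$. The perturbed hypersurface is diffeomorphic to $\Sigma$ (indeed isotopic to $\Sigma \times \{0\}$), so we may regard the resulting convex structure as living on $\Sigma$ itself. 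Note that we must check the dividing set is separating — this holds because $\Gamma = \{\alpha(X) = 0\}$ bounds $R_+$ and $R_-$ on the two sides, so $\Sigma \setminus \Gamma = R_+ \sqcup R_-$.

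Finally I would apply the implication $(2) \Rightarrow (1)$ of \cref{theorem:main0}: the existence of a contact structure on $\Sigma \times \R$ making $\Sigma \times \{0\}$ convex with contact dividing set $(\Gamma, \xi_\Gamma)$ yields an exact folded symplectic form $\omega$ on $\Sigma$ with positive contact-type Liouville form $\lambda$ and contact fold $(\Gamma, \xi_\Gamma)$. To promote this to a \emph{folded Weinstein} structure in the sense of \cref{def:folded_weinstein}, I would trace through the correspondence of \cref{theorem:main0} and check that the Weinstein structures on $R_+$ and $(-1)^{n+1} R_-$ furnished by \cref{theorem:hh19} are compatible with the Liouville form $\lambda$ produced by the correspondence — i.e. that the rescalings of $\iota^*\alpha$ defining the exact symplectic structures on the two regions assemble (after the Darboux-type normalization near $\Gamma$) into a genuine folded Weinstein structure with both sides inducing $\xi_\Gamma$ on the fold. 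The main obstacle I anticipate is precisely this last compatibility bookkeeping: \cref{theorem:main0} as stated only asserts an exact folded symplectic form with a contact-type Liouville form, and one needs to verify that the Honda--Huang Weinstein structures are not destroyed — only reparametrized near the fold — in passing through the proof of that theorem, so that the output genuinely satisfies the Weinstein (as opposed to merely exact symplectic) condition on each region. Everything else is a direct concatenation of cited results.
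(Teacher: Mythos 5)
Your proposal follows the paper's proof essentially step for step: stable almost complex structure on $\Sigma$ gives an almost contact structure on $\Sigma\times\R$ (\cref{lemma:SACStoContact}), Gromov's $h$-principle upgrades it to a genuine contact structure, \cref{theorem:hh19} produces a $C^0$-small Weinstein convex perturbation of $\Sigma\times\{0\}$, and \cref{lemma:CHTtoFolded} (the $(2)\Rightarrow(1)$ direction of \cref{theorem:main0}) yields the exact folded symplectic form with positive contact-type Liouville form. The one step you flag as unresolved --- promoting this to a folded Weinstein structure --- is closed in the paper more directly than by matching the Honda--Huang Weinstein structures against the new primitive: Weinstein convexity supplies a Morse function $\phi$ for which the characteristic foliation is gradient-like, and the Liouville vector field $X_{\lambda}$ of the form produced by \cref{lemma:CHTtoFolded} directs the characteristic foliation on $R_+$ (and its opposite on $R_-$), so $X_{\lambda}$ is automatically gradient-like for $\pm\phi\mid_{R_{\pm}}$ and $(\lambda,\phi)$ satisfies \cref{def:folded_weinstein} with no further compatibility check.
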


\begin{remark}
Our proof of \cref{theorem:main} recovers both (1) and (2) of \cref{theorem:old} with a different technique. The difference, particularly with Cannas da Silva's method in \cite{cannas2010foldedfour}, will be further elaborated in \cref{subsec:main}.
\end{remark}

In \cite{baykur2006kahler}, Baykur also considered other folded structures on $4$-manifolds such as \textit{folded Kähler structures} and \textit{folded Lefschetz fibrations} that naturally come with the folded Weinstein territory. We concern ourselves with the latter. Roughly, a folded Lefschetz fibration is obtained by gluing two Lefschetz fibrations with (symplectically) isotopic monodromy along their boundaries to obtain a closed manifold; a precise definition is \cref{def:AFWLF}. Our final main result is a generalization of Baykur's folded Lefschetz fibration existence result to all dimensions.

\begin{theorem}\label{thm:lefschetz}
Let $2n\geq 4$. Every folded Weinstein manifold $(\Sigma^{2n}, \lambda, \phi)$ admits a compatible folded Weinstein Lefschetz fibration. In particular, every closed and oriented manifold of dimension $2n$ with a stable almost complex structure admits a folded Weinstein Lefschetz fibration. 
\end{theorem}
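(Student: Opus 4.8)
\textbf{Proof proposal for \cref{thm:lefschetz}.}

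The plan is to reduce the folded statement to the (known) existence of Weinstein Lefschetz fibrations on Weinstein manifolds with contact boundary, and then to glue the two halves along the fold. First I would recall that a folded Weinstein manifold $(\Sigma^{2n},\lambda,\phi)$ decomposes as $\Sigma = \overline{R}_+ \cup_\Gamma \overline{R}_-$, where $\overline{R}_+$ and $(-1)^{n+1}\overline{R}_-$ are Weinstein domains (with corners smoothed) whose common contact boundary is the fold $(\Gamma,\xi_\Gamma)$, and that by the definition of a folded Weinstein structure the two induced Weinstein structures on $\Gamma$ agree up to the appropriate isotopy/deformation. By the existence theorem for Weinstein Lefschetz fibrations — Giroux--Pardon in the Stein/Weinstein setting — each of $\overline{R}_+$ and $(-1)^{n+1}\overline{R}_-$ admits a Weinstein Lefschetz fibration over the disk $\D^2$ with Weinstein fiber $F_\pm$ and a factorization of the monodromy into positive Dehn twists. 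The open-book on the boundary $\Gamma$ induced by the fibration on $\overline{R}_+$ is, after stabilization, isotopic to the one induced by the fibration on $\overline{R}_-$; here I would invoke the fact that two Weinstein Lefschetz fibrations filling the same contact manifold become equivalent after a common stabilization (the higher-dimensional analogue of the statement used by Baykur in dimension $4$, due to Giroux--Pardon / Lazarev).

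The key steps, in order, are: (1) make precise, via \cref{def:AFWLF}, what a \emph{compatible} folded Weinstein Lefschetz fibration is — namely two Weinstein Lefschetz fibrations $\pi_\pm:\overline{R}_\pm \to \D^2$ glued along $\partial\D^2$ so that the total space is $\Sigma$, the total space of a Lefschetz fibration over $S^2 = \D^2 \cup_{\partial} \D^2$, with the folded Weinstein structure matching $\lambda$ up to homotopy through folded Weinstein structures; (2) produce $\pi_+$ on $\overline{R}_+$ by the Weinstein Lefschetz existence theorem; (3) produce $\pi_-$ on $(-1)^{n+1}\overline{R}_-$ similarly, and note that reversing orientation converts it into a Lefschetz fibration with \emph{negative} vanishing cycles (an ``achiral'' or orientation-reversed piece) on $\overline{R}_-$ itself, which is exactly what the folded picture wants; (4) arrange the two boundary open books to agree by stabilizing both fibrations — stabilization of a Weinstein Lefschetz fibration corresponds to a subcritical (index $n$) handle attachment that does not change the Weinstein homotopy type of the domain, so after finitely many stabilizations of each side the induced contact open books on $\Gamma$ are isotopic; (5) glue to obtain the folded Weinstein Lefschetz fibration on $\Sigma$, and check that the resulting folded symplectic/Weinstein structure is homotopic to the original $\lambda$ (using that folded Weinstein structures, like Weinstein structures, are flexible enough that the Lefschetz-fibration-induced one lies in the same homotopy class). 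The ``in particular'' clause then follows immediately by combining with \cref{theorem:main}.

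The main obstacle I expect is step (4): matching the two boundary open books. In dimension $4$ this is handled by the fact that any two open book decompositions supporting the same contact structure have a common positive stabilization (Giroux correspondence), but the full Giroux correspondence in higher dimensions was, until recently, not available in the form needed. What is actually needed, however, is weaker — only that the two \emph{Weinstein} Lefschetz fibrations arising as above (not arbitrary supporting open books) admit a common stabilization, and this is precisely the content of the Giroux--Pardon uniqueness-up-to-stabilization result for Weinstein Lefschetz fibrations. So I would route the argument through Giroux--Pardon rather than through a general Giroux correspondence, which keeps the proof honest in all dimensions $2n \geq 4$. A secondary technical point is the corner-smoothing and collar-matching when gluing $\pi_+$ and $\pi_-$: one must ensure the Liouville forms, the fibrations, and the vanishing cycles all interpolate smoothly across a collar of $\Gamma$; this is routine but should be spelled out, perhaps by working in a normal form for the fold (à la Martinet's Darboux theorem) in which a neighborhood of $\Gamma$ is a standard ``folded'' model $\Gamma \times (-\epsilon,\epsilon)$ with an explicit fibration over an annulus $\{1-\epsilon < |z| < 1+\epsilon\} \subset S^2$.
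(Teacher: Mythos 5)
Your overall strategy coincides with the paper's: split $\Sigma$ into two Weinstein pieces (the paper uses the compact retracts $\pm R_\pm^\ve$ of \cref{remark:retract_domain}), apply Giroux--Pardon to each to obtain Weinstein Lefschetz fibrations, commonly stabilize the induced open books on $(\Gamma,\xi_\Gamma)$, and reassemble via the asymmetric double; the ``in particular'' clause via \cref{theorem:main} is also how the paper concludes.

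The genuine gap is in your step (4), specifically the claim that the required common stabilization ``is precisely the content of the Giroux--Pardon uniqueness-up-to-stabilization result for Weinstein Lefschetz fibrations,'' which you use to route around the Giroux correspondence. First, \cite{giroux2017lefschetz} is an existence theorem and does not supply a uniqueness-up-to-stabilization statement. Second, even such a statement for Lefschetz fibrations on a \emph{fixed} Weinstein domain would not apply here: $\overline{R}_+$ and $(-1)^{n+1}\overline{R}_-$ are in general two different, non-Weinstein-homotopic fillings of the same contact manifold $(\Gamma,\xi_\Gamma)$, so the two Lefschetz fibrations you produce do not share a total space. What is actually needed is that the two abstract open books $(W_+,\tau_{\tilde{\mathcal{L}}^+})$ and $(W_-,\tau_{\tilde{\mathcal{L}}^-})$ --- which merely support the same contact structure $\xi_\Gamma$ --- admit a common positive stabilization, i.e., the uniqueness half of the Giroux correspondence in dimension $2n-1\geq 3$. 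This is exactly the input the paper takes from \cite{BHH23}, and it cannot be replaced by Giroux--Pardon. Once the open books are commonly stabilized, the paper's observation that each open book stabilization lifts to a Lefschetz fibration stabilization of the corresponding filling, preserving its Weinstein homotopy type, finishes the argument; note also that your description of stabilization as a ``subcritical (index $n$) handle attachment'' is off --- it is a critical handle attached to the fiber together with a new vanishing cycle, forming a cancelling pair in the total space --- although your conclusion that the total space is unchanged up to Weinstein homotopy is correct.
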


\begin{remark}
Baykur remarks (see his discussion after \cite[Definition 6.5]{baykur2006kahler}) that, assuming future advances in Weinstein Lefschetz fibration technology in higher dimensions, many of his definitions and results generalize naturally. This advancement is now possible with work of Giroux and Pardon \cite{giroux2017lefschetz} and the work of the author with Honda and Huang \cite{BHH23}.     
\end{remark}

We close the introduction with two more literature remarks.

\begin{remark}[Folded Weinstein language due to Honda and Huang]\label{remark:history}
Honda and Huang \cite{honda2019convex} introduce a structure they call \textit{folded Weinstein} for hypersurfaces in contact manifolds, though this notion is a priori slightly different to how we use \textit{folded Weinstein} in this paper (i.e. \cref{def:folded_weinstein}). Namely, the folded Weinstein hypersurfaces of \cite{honda2019convex} are specifically hypersurfaces of contact manifolds, rather than manifolds with an intrinsic structure as in \cref{def:folded_form} or \cref{def:folded_weinstein}. Moreover, a folded Weinstein hypersurface of \cite{honda2019convex} is allowed to have many different folds that alternate between positive and negative cobordism regions, rather than one fold between two domains. However, there is no contradiction or confusion --- every folded Weinstein hypersurface in a contact manifold can be perturbed to a Weinstein convex hypersurface, and by our proof of \cref{theorem:main}, this induces on $\Sigma$ a folded Weinstein structure in the sense of \cref{def:folded_weinstein}. For this reason, our language is appropriate a posteriori.
\end{remark}

\begin{remark}\label{remark:salamon}
The notes of Salamon \cite{salamon2022notes}, based on a series of lectures given by Eliashberg in 2022 on the work of Eliashberg and Pancholi \cite{eliashberg2022hondahuangs}, discuss an intrinsic notion of convex hypersurface theory via pairs of differential forms on even-dimensional manifolds. It is likely that our work can be formulated in this framework.    
\end{remark}

\subsection{Organization} We review the basics of Weinstein topology, folded symplectic forms, convex hypersurface theory, and Lefschetz fibrations in \cref{sec:prelim}. In \cref{sec:exact}, we record the various definitions needed to make each of our main results precise, and provide some constructions and examples. Finally, in \cref{sec:proofs} we prove \cref{theorem:main0}, \cref{theorem:main}, and \cref{thm:lefschetz}.

\subsection{Acknowledgments} We thank Gabe Islambouli for an enlightening conversation that sparked this project, and in particular for introducing the author to the literature of folded symplectic forms. We also thank Ko Honda and Austin Christian for helpful comments. 
\section{Preliminaries}\label{sec:prelim}

Here we review the necessary background material. None of the content in this section is original.

\begin{remark}[A word on notation.]
Let $M$ be a manifold, let $\iota:M \hookrightarrow M \times \R$ be the inclusion into the $0$-level, let $\pi: M \times \R \to M$ be the projection, and let $\eta$ be a differential form on $M \times \R$. We are often interested in the form $\pi^*\iota^*\eta$ on $M\times \R$. To clean up the notation, we will write $\eta_{M} := \iota^*\eta$ and then will omit the $\pi^*$ when viewing the form on $M \times \R$. There should be no risk of confusion. 
\end{remark}

\subsection{Weinstein topology}

Here we review the basic definitions of Weinstein structures, mostly to establish notation conventions used in this paper. We also include a review of \textit{ideal} Liouville structures as described by Giroux \cite{giroux2020ideal}. There is much left unsaid about Weinstein manifolds; for more details, the standard reference is \cite{cieliebak2012stein}, and a nice survey article is \cite{eliashberg2017weinstein}.

\begin{definition}\label{def:liouville}
Let $V$ be a compact manifold of dimension $2n$ with nonempty boundary. A \textbf{Liouville (domain) structure} on $V$ is a $1$-form $\lambda\in \Omega^1(V)$ such that $d\lambda$ is symplectic and such that $\iota^*\lambda$ is a positive contact form, where $\iota:\partial V \hookrightarrow V$ is the inclusion and $\partial V$ is oriented as the boundary of $V$, oriented by its symplectic structure. The unique vector field $X_{\lambda}$ satisfying $X_{\lambda} \lrcorner d\lambda = \lambda$ is called the \textbf{Liouville vector field}. Equivalently, a Liouville (domain) structure is given by a pair $(\omega, X)$ where $\omega$ is a symplectic form, $X$ is a vector field satisfying $\mathcal{L}_X\omega = \omega$, and $X$ is outwardly transverse to $\partial X$. The data $(V, \lambda)$ or $(V, \omega, X)$ is called a \textbf{Liouville domain}. More generally, we may speak of a compact \textbf{Liouville cobordism} wherein the Liouville vector field $X_{\lambda}$ is allowed to point inward along a negative boundary component $\partial_- V$.
\end{definition}

Given $(\omega, X)$ the equivalence in \cref{def:liouville} comes from setting $\lambda := X \lrcorner\omega$. Indeed, if $X_{\lambda}$ is the Liouville vector field for $\lambda$, then $\mathcal{L}_{X_{\lambda}} d\lambda = d\lambda$, and the positive contact condition is equivalent to outward transversality at the boundary. 

\begin{definition}
Let $W$ be a compact manifold of dimension $2n$ with nonempty boundary. A \textbf{Weinstein structure} is a tuple $(\lambda, \phi)$ where
\begin{enumerate}
    \item $(W,\lambda)$ is a Liouville domain, 
    \item $\phi: W \to \R$ is a Morse function such that $\partial W = \phi^{-1}(c)$ is a regular level set, and 
    \item the Liouville vector field $X_{\lambda}$ is gradient-like for $\phi$. 
\end{enumerate}
Equivalently, a Weinstein structure on $W$ may be given by a triple $(\omega, X, \phi)$, where $(\omega, X)$ is a Liouville structure. The data $(W, \lambda, \phi)$ or $(W, \omega, X, \phi)$ is a \textbf{Weinstein domain}. A \textbf{Weinstein homotopy} is simply a $1$-parameter family $(W, \lambda_t, \phi_t)$ of Weinstein structures on a fixed domain $W$. We often abuse language and say that two Weinstein structures on different but diffeomorphic manifolds are \textbf{Weinstein homotopic} if they are Weinstein homotopic under the pullback via a diffeomorphism.  
\end{definition}

The Liouville and Weinstein structures discussed so far by definition reside on compact domains. It is also natural to consider the structures on open manifolds with cylindrical ends. For our purposes, we simply record the following definition. 

\begin{definition}
Let $(V, \lambda)$ be a Liouville domain. The \textbf{completion} of $(V,\lambda)$ is the data $(\hat{V}, \hat{\lambda})$ where $\hat{V} := V \cup (\partial V \times [0,\infty)_s)$ and $\hat{\lambda}$ is the extension of $\lambda$ to $\hat{V}$ via $e^s\, \alpha$, where $\alpha$ is the induced contact form on $\partial V$. We call $(\hat{V}, \hat{\lambda})$ a (\textbf{completed, finite-type}) \textbf{Liouville manifold}.     
\end{definition}

There is one last type of Liouville structure to discuss which has the benefit of living on a compact domain, but simultaneously behaving like a completion. 

\begin{definition}\label{def:ideal_domain}
Let $V$ be a compact manifold of dimension $2n$ with nonempty boundary, and let $\mathring{V} = V \setminus \partial V$ denote the interior of $V$. An \textbf{ideal Liouville structure} on $V$ is an exact symplectic form $\omega$ on $\mathring{V}$ admitting a primitive $\lambda$ such that there is a smooth function $u: V \to [0,\infty)$ with $\partial V = u^{-1}(0)$ a regular level set for which the $1$-form $u\, \lambda$ on $\mathring{V}$ extends to a $1$-form on all of $V$, inducing a contact form on $\partial V$. Such a $1$-form $\lambda$ is called an \textbf{ideal Liouville form} and the pair $(V, \omega)$ is an \textbf{ideal Liouville domain}.      
\end{definition}

Ideal Liouville domains exhibit a number of nice properties over normal Liouville domains or even completed finite-type Liouville manifolds. For example, the ideal Liouville structure $\omega$ --- independent of the choice of primitive --- induces a positive contact structure on $\partial V$ \cite[Proposition 2]{giroux2020ideal}. 

\begin{example}[Ideal completion]
Let $(V, \lambda_0)$ be a compact Liouville domain of dimension $2n$. Via the flow of the the Liouville vector field we may identify a collar neighborhood of the boundary $N(\partial V) \cong [-\ve, 0]_s \times \partial V$ with $\lambda_0 = e^s\, \alpha_0$, where $\alpha_0$ is the contact form induced on $\partial V$ by $\lambda_0$. Let $u:N(\partial V) \to [0,1]$ be a smooth function $u=u(s)$ such that $u(0) = 0$, $u'(s) < 0$ for $-\ve < s\leq 0$, $u(-\ve) = 1$, and $u'(-\ve) = 0$. Extend $u$ to a smooth function on $V_0$ by the constant function $u\equiv 1$. Finally, define a $1$-form on $\mathring{V}$ by $\lambda:= \frac{1}{u}\, \lambda_0$. 

We claim that $\omega := d(\frac{1}{u}\, \lambda_0)$ is an ideal Liouville structure on $V$. Indeed, on $\mathring{N}(\partial V):=[-\ve,0)_s\times \partial V$ we have $\omega =e^s\frac{u(s) - u'(s)}{u(s)^2}\, ds\wedge \alpha_0 + \frac{1}{u(s)}\, d\alpha_0$ and so 
\[
\omega^n = e^{ns}\frac{u(s) - u'(s)}{u(s)^{n+1}}\, ds\wedge \alpha_0\wedge (d\alpha_0)^{n-1} > 0.
\]
Moreover, the Liouville vector field in $\mathring{N}(\partial V)$ is 
\[
X_{\lambda} = e^{-s}\frac{u(s)^2}{u(s)-u'(s)}\, \partial_s
\]
which is positively parallel to $\partial_s$ and is complete (in forward time) on $\mathring{N}(\partial V)$, hence $(\mathring{V}, \lambda)$ is symplectomorphic to the completion of the Liouville domain $(V, \lambda_0)$. We call the ideal Liouville domain $(V, \omega)$ the \textbf{ideal completion} of $(V, \lambda_0)$. 
\end{example}

\subsection{Folded symplectic forms}

We only need one nontrivial fact about folded symplectic forms, which is a local normal form near the fold (\cref{lemma:fold_normal}). For more details on folded symplectic forms, we refer the reader to \cite{cannas2000unfolding,cannas2010foldedfour}. We begin by restating the main definition from the introduction for convenience. 

\begin{definition}
Let $\Sigma$ be a closed and oriented manifold of dimension $2n$. A \textbf{folded symplectic form} is a closed $2$-form $\omega\in \Omega^2(\Sigma)$ such that
\begin{enumerate}
    \item $\omega^n \in \bigwedge^{2n} T^*\Sigma$ intersects the $0$-section transversally along the \textbf{fold} $\Gamma:= \{\omega^n = 0\}$, and 
    \item $\iota^*\omega$ has maximal rank, where $\iota:\Gamma\hookrightarrow \Sigma$ is the inclusion. 
\end{enumerate}
The tuple $(\Sigma, \omega)$ is a \textbf{folded symplectic manifold}. We call $R_{\pm}:= \{\pm \omega^n >0\}$ the \textbf{positive region} and \textbf{negative region}, respectively.  
\end{definition}

Since $\omega_{\Gamma}:=\iota^*\omega$ has maximal rank, $\ker \omega_{\Gamma}$ is a $1$-dimensional subbundle of $T\Gamma \to \Gamma$ called the \textit{null-foliation} of the folded symplectic form. Moreover, $\omega_{\Gamma}^{n-1}$ naturally orients the quotient bundle $T\Gamma / \ker \omega_{\Gamma}$. Since $\Gamma$ is oriented, there is an induced orientation on the line bundle $\ker\omega_{\Gamma} \to \Gamma$. In particular, it admits nonvanishing sections. With this in mind, the following lemma gives a local normal form for a folded symplectic form near the fold. 

\begin{lemma}[\cite{cannas2000unfolding}]\label{lemma:fold_normal}
Let $(\Sigma, \omega)$ be a folded symplectic manifold with fold $\Gamma$. Let $\iota:\Gamma \hookrightarrow \Sigma$ be the inclusion, let $\omega_{\Gamma} := \iota^*\omega$, let $v$ be a non-vanishing section that orients the line bundle $\ker \omega_{\Gamma} \to \Gamma$, and let $\alpha\in \Omega^1(\Gamma)$ be a $1$-form with $\alpha(v) = 1$. Then near $\Gamma$ there are local coordinates on a neighborhood $N(\Gamma) \cong \Gamma \times (-\ve, \ve)_{\tau}$ of $\Gamma$ in $\Sigma$ such that, on $N(\Gamma)$, we have 
\begin{equation}\label{eq:fold_normal}
    \omega = \omega_{\Gamma} - d(\tau^2\, \alpha).
\end{equation}
\end{lemma}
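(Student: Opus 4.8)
The plan is to produce, near the fold $\Gamma$, an explicit model for $\omega$ and then argue that this model can be achieved by a change of coordinates on a tubular neighborhood. First I would fix the data as in the statement: the section $v$ orienting $\ker\omega_\Gamma$, and a $1$-form $\alpha$ on $\Gamma$ with $\alpha(v)=1$. Using the orientation of $\ker\omega_\Gamma$ (equivalently, a choice of nonvanishing section $v$) one obtains a trivialization of a tubular neighborhood $N(\Gamma)\cong \Gamma\times(-\ve,\ve)_\tau$; I want the $\tau$-coordinate set up so that $\partial_\tau$ is transverse to $\Gamma$ and, along $\Gamma$, the pair $(\omega,\tau)$ reflects the transversality hypothesis (1) of \cref{def:folded_form}, i.e.\ $d(\omega^n)$ is nonzero in the $d\tau$-direction. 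Concretely, since $\omega^n$ vanishes transversally along $\Gamma=\{\tau=0\}$ we may arrange (rescaling $\tau$) that $\omega^n = \tau\cdot(\text{nonvanishing } 2n\text{-form}) + O(\tau^2)$ in a suitable sense; the claimed normal form $\omega_\Gamma - d(\tau^2\alpha)$ has $\omega^n$ vanishing to exactly first order along $\{\tau=0\}$ because the $\tau^2$ differentiates to $2\tau\, d\tau$, which is the key compatibility to check the model is the right one.

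The main steps, in order, would be: (i) choose the tubular neighborhood and $\tau$-coordinate adapted to $\ker\omega_\Gamma$; (ii) write $\omega = \omega_\Gamma + d\tau\wedge\beta_\tau + \gamma_\tau$ where $\beta_\tau,\gamma_\tau$ are $\tau$-dependent forms on $\Gamma$ (using that $\omega$ restricts to $\omega_\Gamma$ on $\{\tau=0\}$, so $\gamma_0 = 0$); (iii) use closedness $d\omega=0$ to relate $\partial_\tau\gamma_\tau$, $d\beta_\tau$, and derivatives, and use the maximal-rank and transversality hypotheses to pin down the leading $\tau$-behavior, forcing $\beta_0$ to be (cohomologous to) a multiple of $\alpha$ along the null direction; (iv) apply a Moser-type argument: interpolate $\omega_t = (1-t)\omega + t\,\omega_{\mathrm{model}}$ where $\omega_{\mathrm{model}} = \omega_\Gamma - d(\tau^2\alpha)$, verify $\omega_t$ is a folded symplectic form with the same fold for all $t$ (this uses that both forms agree to the correct order along $\Gamma$, so $\omega_t^n$ still vanishes transversally there), and integrate the resulting time-dependent vector field, which vanishes along $\Gamma$, to get a diffeomorphism fixing $\Gamma$ and pulling $\omega$ back to the model.

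The hard part will be step (iv), the Moser argument, because the relevant $2$-form $\omega_t$ is \emph{not} symplectic: it degenerates along $\Gamma$, so one cannot simply invert $\omega_t$ to solve for the Moser vector field. One must instead solve $\iota_{X_t}\omega_t = -\sigma$ (where $d\sigma/dt$ bookkeeping gives $\sigma = \tau^2\alpha - (\text{primitive of }\omega-\omega_{\mathrm{model}})$, chosen to vanish to second order along $\Gamma$) working on the two symplectic halves $R_\pm$ and checking that $X_t$ extends smoothly across $\Gamma$, vanishing there; the vanishing to second order of the error term is exactly what makes $X_t/\tau$-type estimates work and gives a smooth, complete-in-small-time flow on the neighborhood. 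I would handle this either by the explicit coordinate bookkeeping that \cite{cannas2000unfolding} uses, or by passing to the ``unfolding'' (the double cover branched along $\Gamma$, or the associated genuine symplectic form on a blown-up space) where the degeneration disappears and a standard relative Moser theorem applies, then pushing the normal form back down. The rest — closedness, the rank computation for $\iota^*(\omega_\Gamma - d(\tau^2\alpha)) = \omega_\Gamma$, and transversality of $(\omega_\Gamma - d(\tau^2\alpha))^n$ along $\{\tau=0\}$ — is routine bookkeeping with the wedge powers.
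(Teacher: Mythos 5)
First, a point of comparison: the paper does not prove \cref{lemma:fold_normal} --- the lemma is imported from \cite{cannas2000unfolding} (with a sign flip explained in \cref{remark:orientation}) --- so there is no in-paper proof to measure your argument against. Your sketch does follow the route of the cited reference: normalize a tubular neighborhood, compare $\omega$ to the model $\omega_\Gamma - d(\tau^2\alpha)$, and run a Moser argument that survives the degeneration along $\Gamma$.

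That said, as a proof your proposal has a genuine gap, and you have correctly located it yourself: step (iv) is asserted, not executed. Saying you would handle the degenerate Moser step ``either by the explicit coordinate bookkeeping that \cite{cannas2000unfolding} uses, or by passing to the unfolding'' names two possible proofs rather than giving one, and essentially all of the content of the lemma lives in that step. Moreover, two of the details you do supply are misstated, and they are exactly the inputs that make (iv) work. In step (i), the normalization you impose on $\tau$ --- that $d(\omega^n)$ be nonzero in the $d\tau$-direction --- holds automatically for \emph{any} coordinate cutting out $\Gamma$ transversally, by hypothesis (1) of \cref{def:folded_form}; the normalization actually needed is that $\partial_\tau$ lie in the $2$-dimensional kernel of $\omega$ restricted to $T\Sigma|_\Gamma$ (this kernel contains $v$ together with exactly one transverse direction up to scale, so such a $\tau$ exists). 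That choice is what forces $\omega$ and $\omega_{\mathrm{model}}$ to agree as sections of $\bigwedge^2 T^*\Sigma|_\Gamma$, equivalently $\beta_0 = 0$ in your step (ii) decomposition --- not ``$\beta_0$ cohomologous to a multiple of $\alpha$'' as in your step (iii): since $-d(\tau^2\alpha) = -2\tau\, d\tau\wedge\alpha - \tau^2\, d\alpha$, the $d\tau\wedge\alpha$ term of the model appears only at order $\tau$. Without $\beta_0=0$ the difference $\omega - \omega_{\mathrm{model}}$ need not vanish along $\Gamma$, its primitive $\sigma$ then vanishes only to first order, and solving $\iota_{X_t}\omega_t = -\sigma$ produces a vector field that need not vanish (or even be smooth) along $\Gamma$ after dividing by the degenerating directions of $\omega_t$. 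With the correct normalization the chain is: $\omega - \omega_{\mathrm{model}} = O(\tau)$, hence the radial-homotopy primitive is $O(\tau^2)$, hence $X_t = O(\tau)$, and the flow exists and fixes $\Gamma$. So the skeleton is right, but the one genuinely hard step is missing and the preparatory normalization feeding into it is not the correct one.
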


\begin{remark}[Orientation conventions]\label{remark:orientation}
The statement of \cref{lemma:fold_normal} differs from that of \cite[Theorem 1]{cannas2000unfolding} by a negative sign because of a difference in orientation convention. In our paper, $N(\Gamma)\cong \Gamma \times (-\ve, \ve)_{\tau}$ is oriented via $\Omega_{\Gamma} \wedge d\tau$, where $\Omega_{\Gamma}$ is an orienting volume form on $\Gamma$, so that $R_+ \cap N(\Gamma) = \{\tau > 0\}$. Indeed, from \eqref{eq:fold_normal} we have $\omega = \omega_{\Gamma} - 2\tau\, d\tau + \tau^2\, d\alpha$ and so 
\begin{equation}
    \omega^n = 2n\tau\left(\alpha \wedge \omega_{\Gamma}^{n-1}\wedge d\tau \, + \, \tau^2\, \alpha \wedge (d\alpha)^{n-1}\wedge d\tau\right).
\end{equation}
By construction, $\alpha \wedge \omega_{\Gamma}^{n-1}$ orients $\Gamma$ and so $\alpha \wedge \omega_{\Gamma}^{n-1}\wedge d\tau$ orients $N(\Gamma)$. It follows that $\omega^n > 0$ for $\tau > 0$.    
\end{remark}

\subsection{Convex hypersurface theory}

References for a more thorough background on convex hypersurface theory are \cite{giroux1991convexite,HH18,sackel2019handle}; see also Salamon's notes \cite{salamon2022notes} on the work of Eliashberg and Pancholi \cite{eliashberg2022hondahuangs}. Again we repeat the main definition for convenience. 

\begin{definition}
Let $(M, \xi=\ker \alpha)$ be a co-oriented contact manifold. A smoothly embedded closed and co-orientable hypersurface $\Sigma \subset M$ is \textbf{convex} if there is a contact vector field $X$ everywhere transverse to $\Sigma$. We define the \textbf{dividing set} to be $\Gamma^X:= \{\alpha(X) = 0\}$, and the \textbf{positive (resp. negative) region} to be $R_{\pm}^X := \{\pm\alpha(X) > 0\}$.    
\end{definition}

\begin{remark}
Note that $\Gamma^X$ and $R_{\pm}^X$ depend on the choice of contact vector field. However, the (contact) isotopy class of $\Gamma^X$ in $\Sigma$ is independent of $X$ and so we will typically omit the notational dependence on the choice of contact vector field.      
\end{remark}

Integrating the transverse contact vector field $X$ (and decaying it with a contact Hamiltonian) gives an arbitrarily large vertically-invariant standard neighborhood of a convex hypersurface. 

\begin{lemma}[\cite{giroux1991convexite}]\label{lemma:contactnbd}
Let $\Sigma \subset (M, \xi)$ be a convex hypersurface. Then there is a neighborhood $N(\Sigma) \cong \Sigma \times \R_t$ of $\Sigma$ in $M$ where $\Sigma = \Sigma\times \{0\}$ and $\xi = \ker(f\, dt + \beta)$ for a smooth $t$-independent function $f: \Sigma \to \R$ and a $t$-independent $1$-form $\beta \in \Omega^1(\Sigma)$. Moreover, the contact condition is 
\begin{equation}
    dt\wedge \left(f\, (d\beta)^n - n\, df\wedge \beta \wedge (d\beta)^{n-1}\right) > 0.
\end{equation}
In particular, $f\, (d\beta)^n - n\, df\wedge \beta \wedge (d\beta)^{n-1}$ is an orienting volume form on $\Sigma$. 
\end{lemma}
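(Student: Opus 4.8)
The plan is to realize $N(\Sigma)$ as the flow domain of the transverse contact vector field $X$, and then to rescale the ambient contact form so that it becomes invariant under that flow; the contact condition in the stated coordinates then falls out of a short wedge-power computation.

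First I would use transversality of $X$ to the closed hypersurface $\Sigma$ to obtain, for some $\ve>0$, an embedding $\Sigma\times(-\ve,\ve)_t\hookrightarrow M$, $(p,t)\mapsto \phi_X^t(p)$, under which $\Sigma=\Sigma\times\{0\}$ and $X=\partial_t$. Next, pick any contact form $\alpha$ with $\xi=\ker\alpha$; since $X$ is a contact vector field there is a function $g$ with $\mathcal{L}_X\alpha=g\,\alpha$. I would then solve the linear ODE $X(h)+gh=0$ along the flow lines of $X$, with initial condition $h\equiv 1$ on $\Sigma$; the solution $h=\exp\!\bigl(-\int_0^t (g\circ\phi_X^s)\,ds\bigr)$ is positive, and $\alpha':=h\,\alpha$ satisfies $\mathcal{L}_X\alpha'=0$ while still defining $\xi$ with its co-orientation. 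Being $\phi_X$-invariant, $\alpha'$ is $t$-independent in the coordinates $(p,t)$, so setting $f:=\alpha'(X)$ and letting $\beta$ be the summand of $\alpha'$ with no $dt$-component, both $f$ and $\beta$ are pulled back from $\Sigma$, giving $\alpha'=f\,dt+\beta$ with $f$ and $\beta$ as in the statement.

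It then remains to extract the contact condition. Using $t$-independence, $d\alpha'=df\wedge dt+d\beta$; since $(df\wedge dt)^2=0$ and $2$-forms commute, $(d\alpha')^n=(d\beta)^n+n\,df\wedge dt\wedge(d\beta)^{n-1}$. Expanding $\alpha'\wedge(d\alpha')^n$, the term containing $dt\wedge df\wedge dt$ vanishes, and the term $\beta\wedge(d\beta)^n$ vanishes because it is the pullback of a $(2n+1)$-form on the $2n$-dimensional manifold $\Sigma$; reordering the two surviving terms gives $\alpha'\wedge(d\alpha')^n=dt\wedge\bigl(f\,(d\beta)^n-n\,df\wedge\beta\wedge(d\beta)^{n-1}\bigr)$, and positivity of this $(2n+1)$-form with respect to the orientation $N(\Sigma)$ inherits from $M$ is exactly the displayed inequality. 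The final assertion is immediate: $dt\wedge(\cdot)>0$ forces the bracketed $2n$-form to be a volume form on $\Sigma$, and the orientation bookkeeping is insensitive to whether one writes $dt$ first or last since $2n$ is even.

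I do not expect a serious obstacle: this is a classical result of Giroux, and the content is the one-line invariance argument together with routine multilinear algebra. The only points requiring care are the sign conventions relating the orientation of $N(\Sigma)\subset M$ to the inequality, and the mild abuse in writing $N(\Sigma)\cong\Sigma\times\R$ rather than $\Sigma\times(-\ve,\ve)$ --- this is harmless because $(f,\beta)$ is $t$-independent, so $f\,dt+\beta$ is a contact form on all of $\Sigma\times\R$ (the contact condition being $t$-independent), into which the genuine flow neighborhood embeds as an open contact submanifold; equivalently, one enlarges the neighborhood by replacing $X$ with a contact vector field obtained by decaying its contact Hamiltonian.
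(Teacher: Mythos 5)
Your argument is correct and is exactly the route the paper indicates (the paper states this lemma as a cited result of Giroux with no proof of its own, remarking only that one integrates the transverse contact vector field and decays its contact Hamiltonian): flow-box coordinates from $X$, rescaling $\alpha$ by the solution of $X(h)+gh=0$ to make it $t$-invariant, and the standard wedge-power expansion, all of which you carry out correctly, including the sign in $\beta\wedge df\wedge dt=-\,dt\wedge df\wedge\beta$ and the vanishing of $\beta\wedge(d\beta)^n$ by dimension. Your closing remarks on enlarging $\Sigma\times(-\ve,\ve)$ to $\Sigma\times\R$ and on the orientation bookkeeping address the only delicate points.
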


The following lemma allows us to further localize near the dividing set of a convex hypersurface. 

\begin{lemma}[\cite{giroux1991convexite}]\label{lemma:dividingsetnormal}
Let $\Sigma \subset (M, \xi)$ be a closed and oriented convex hypersurface with dividing set $\Gamma$. Let $\iota:\Gamma \to M$ be the inclusion. After a contact isotopy rel $\Sigma$, there are coordinates on a neighborhood $N(\Gamma) \cong \Gamma \times (-\ve, \ve)_{\tau}$ and a contact form $\alpha$ on $N(\Gamma) \times \R_t$ such that 
\begin{equation}
    \alpha = \tau\, dt + \alpha_{\Gamma}
\end{equation}
where $\alpha_{\Gamma}:= \iota^*\alpha$ is the induced contact form on $(\Gamma, \xi_{\Gamma})$. 
\end{lemma}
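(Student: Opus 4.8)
\textbf{Proof proposal for \Cref{lemma:dividingsetnormal}.}

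The plan is to reduce everything to \Cref{lemma:contactnbd} and then run a Moser-type argument in the $\tau$-direction. First I would invoke \Cref{lemma:contactnbd} to obtain the vertically-invariant neighborhood $N(\Sigma)\cong\Sigma\times\R_t$ with $\xi=\ker(f\,dt+\beta)$ for $t$-independent $f$ and $\beta$; since $\Gamma=\{f=0\}$ and, by the contact condition of \Cref{lemma:contactnbd}, $0$ is a regular value of $f$, the hypersurface $\Gamma\subset\Sigma$ is cut out transversally. Choosing a tubular neighborhood $N(\Gamma)\cong\Gamma\times(-\ve,\ve)_\tau$ with $f=\tau$ on $N(\Gamma)$ (rescaling/reparametrizing the normal coordinate so that $f$ becomes the coordinate function $\tau$ — here the nonvanishing of $df$ along $\Gamma$ is what lets us do this, and we may have to shrink $\ve$), the contact form becomes $\alpha_0:=\tau\,dt+\beta$ on $N(\Gamma)\times\R_t$. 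The $1$-form $\beta$ still depends on the $\Gamma$-directions in an arbitrary way, so the remaining task is to normalize $\beta$ to $\alpha_\Gamma:=\iota^*\alpha$ (viewed as $\tau$-independent) through a contact isotopy fixing $\Sigma=\{\tau=0\}$.

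For that normalization step I would write $\beta = \alpha_\Gamma + \tau\,\gamma_\tau$ for a smooth family of $1$-forms $\gamma_\tau$ on $\Gamma$ (Taylor expansion in $\tau$ with remainder), so $\alpha_0 = \tau\,dt + \alpha_\Gamma + \tau\,\gamma_\tau$ and the target form is $\alpha_1 := \tau\,dt + \alpha_\Gamma$; both agree to first order along $\{\tau=0\}$ after possibly absorbing the $dt$-term, and $\alpha_s := \tau\,dt+\alpha_\Gamma+s\tau\,\gamma_\tau$ interpolates. One checks that $\alpha_s$ is contact for all $s\in[0,1]$ after shrinking $\ve$, since the contact condition is open and holds at $\tau=0$ uniformly. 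Then I would apply the standard Gray/Moser stability argument: solve for a time-dependent vector field $X_s$ with $\iota_{X_s}\alpha_s = 0$ and $\iota_{X_s}d\alpha_s = -\dot\alpha_s + (\text{multiple of }\alpha_s)$ using the fact that $\ker\alpha_s$ is symplectic for $d\alpha_s$; because $\dot\alpha_s = \tau\,\gamma_\tau$ vanishes along $\{\tau=0\}$, the vector field $X_s$ vanishes along $\Sigma=\{\tau=0\}$, so its flow fixes $\Sigma$ pointwise, giving a contact isotopy rel $\Sigma$ carrying $\alpha_0$ to $\alpha_1$ up to conformal factor.

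The main obstacle I anticipate is bookkeeping the two things that must hold \emph{simultaneously}: the isotopy must fix $\Sigma$ (which forces $X_s|_{\tau=0}=0$, handled by the vanishing of $\dot\alpha_s$ on $\Gamma$) and it must preserve the product structure with $\R_t$ well enough that the resulting normal form is genuinely $t$-independent of the stated shape $\tau\,dt+\alpha_\Gamma$ — i.e. one should arrange the Moser vector field to have no $\partial_t$-component and be $t$-independent, which follows from $\alpha_0,\alpha_1$ and hence $\alpha_s$ being $t$-independent and of the form $(\text{function})\,dt + (\text{form on }\Gamma\times(-\ve,\ve))$, but it requires a small check that the equations for $X_s$ respect this block structure. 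A secondary technical point is the repeated shrinking of $\ve$ to keep $\alpha_s$ contact and to keep $X_s$ integrable on the (now smaller) neighborhood for all $s\in[0,1]$; since $\Gamma$ is closed this is uniform and causes no trouble. I would also remark that the contact form $\alpha$ in the conclusion is the one obtained after the isotopy and after the conformal rescaling supplied by Gray stability, which is why the statement says ``there are coordinates $\ldots$ and a contact form $\alpha$'' rather than claiming the original $\alpha$ already has this shape.
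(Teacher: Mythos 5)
The paper does not actually prove this lemma --- it is quoted from Giroux's \emph{Convexit\'e en topologie de contact} as a known normal form --- so there is no in-paper argument to compare against; I will assess your proposal on its own terms. Your overall strategy (pass to the $t$-invariant presentation $f\,dt+\beta$ of \cref{lemma:contactnbd}, use the contact condition along $\{f=0\}$ to see that $df\neq 0$ there and straighten $f$ to the collar coordinate $\tau$, then normalize $\beta$ by a Gray/Moser argument) is the standard route, and the first step is correct. There is also a minor fixable slip in the second step: writing $\beta=\alpha_\Gamma+\tau\,\gamma_\tau$ with $\gamma_\tau\in\Omega^1(\Gamma)$ silently discards the $d\tau$-component of $\beta$, which need not vanish along $\Gamma$; you must first choose the collar so that $\partial_\tau\in\ker\beta$, which is possible because $\iota^*\beta$ is contact and hence $\beta$ and $df$ are pointwise linearly independent along $\Gamma$.

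The genuine gap is exactly at the point you defer as a ``small check'': the Gray vector field does not fix $\Sigma$. You identify $\Sigma$ with $\{\tau=0\}$, but in $N(\Gamma)\times\R_t$ the set $\{\tau=0\}$ is $\Gamma\times\R_t$, while $\Sigma\cap(N(\Gamma)\times\R_t)$ is $\{t=0\}$. The vanishing of $\dot\alpha_s=\tau\gamma_\tau$ along $\{\tau=0\}$ only gives $X_s=0$ along $\Gamma\times\R_t$, which is not ``rel $\Sigma$.'' Worse, the hoped-for block structure fails: since $X_s\in\ker\alpha_s$ and $\alpha_s=\tau\,dt+\beta_s$ with $\beta_s$ pulled back from $N(\Gamma)$, writing $X_s=b\,\partial_t+Z$ forces $\tau b+\beta_s(Z)=0$, i.e.\ $b=-\beta_s(Z)/\tau$ wherever $\tau\neq 0$; since $Z=O(\tau)$ this is $O(1)$ but generically nonzero, so $t$-independence of $\alpha_s$ does not kill the $\partial_t$-component. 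Consequently the Gray flow carries $\Sigma=\{t=0\}$ to a graph $\{t=u(p,\tau)\}$ with $u$ vanishing only along $\Gamma$, and in the resulting coordinates the normal form $\tau\,dt+\alpha_\Gamma$ is no longer positioned compatibly with $\Sigma$ --- which is the entire content needed when the lemma is applied in \cref{lemma:CHTtoFolded} to read off $f=\tau$ and $\beta=\beta_\Gamma$ on $\Sigma$ itself. To close the gap one must normalize the pair $(f,\beta)$ intrinsically on a neighborhood of $\Gamma$ in $\Sigma$, up to simultaneous positive rescaling and a diffeomorphism of $N(\Gamma)$ (equivalently, run the Moser argument through $t$-translation-equivariant diffeomorphisms preserving $\{t=0\}$, or correct the graph by a further isotopy and control the new error term $-\tau\,du$); none of this follows automatically from $t$-independence as claimed.
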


In the introduction it was stated that the positive and negative regions of a convex hypersurface are naturally exact symplectic fillings of the contact-type dividing set. The precise statement is given in the language of ideal Liouville domains.

\begin{proposition}[\cite{giroux1991convexite}]\label{prop:CHT_ideal}
Let $(\Sigma \times \R_t, \ker(f\, dt + \beta))$ be a vertically-invariant neighborhood of a convex hypersurface $\Sigma = \Sigma \times \{0\}$. Let $\lambda_{\pm}:= \frac{1}{f}\, \beta\mid_{R_{\pm}}$. Then 
\[
\left((\pm 1)^{n+1}\overline{R}_{\pm},\, d\lambda_{\pm}\right)
\]
is an ideal Liouville domain. Moreover, the characteristic foliation on $R_{+}$ is directed by the (ideal) Liouville vector field $X_{\lambda_+}$, and the characteristic foliation on $R_-$ is directed by the negative (ideal) Liouville vector field $-X_{\lambda_-}$.
\end{proposition}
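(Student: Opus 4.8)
\textbf{Proof plan for \cref{prop:CHT_ideal}.}

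The plan is to work entirely inside the vertically-invariant model $(\Sigma \times \R_t, \ker(f\,dt + \beta))$ and verify the ideal Liouville domain axioms of \cref{def:ideal_domain} directly, using $u := |f|$ as the boundary-defining function on $\overline{R}_\pm$ (note $\Gamma = \{f = 0\}$ and $df \neq 0$ there by the contact condition in \cref{lemma:contactnbd}, so $u$ has $\partial \overline{R}_\pm = \Gamma$ as a regular level set). First I would treat the positive region: on $R_+$ we have $u = f > 0$, so $u\,\lambda_+ = \beta$ extends smoothly across $\Gamma$, and one must check that $\iota_\Gamma^* \beta$ is a positive contact form on $(\Gamma, \xi_\Gamma)$. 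This should follow from feeding the normal form $\alpha = \tau\,dt + \alpha_\Gamma$ of \cref{lemma:dividingsetnormal} into the contact condition — there $f = \tau$ and $\beta = \alpha_\Gamma$, so $\iota_\Gamma^*\beta = \alpha_\Gamma$ is exactly the induced contact form — combined with the fact that the orienting volume form $f(d\beta)^n - n\,df\wedge\beta\wedge(d\beta)^{n-1}$ of \cref{lemma:contactnbd} restricted near $\Gamma$ forces $df\wedge\beta\wedge(d\beta)^{n-1} < 0$ along $\Gamma$, i.e. $\beta\wedge(d\beta)^{n-1} > 0$ with respect to the outward conormal.

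Next I would verify that $d\lambda_+$ is a genuine (exact) symplectic form on the interior $\mathring{R}_+$. Computing $d\lambda_+ = d(\tfrac{1}{f}\beta) = \tfrac{1}{f}\,d\beta - \tfrac{1}{f^2}\,df\wedge\beta$, one expands $(d\lambda_+)^n = \tfrac{1}{f^n}(d\beta)^n - \tfrac{n}{f^{n+1}}\,df\wedge\beta\wedge(d\beta)^{n-1} = \tfrac{1}{f^{n+1}}\bigl(f(d\beta)^n - n\,df\wedge\beta\wedge(d\beta)^{n-1}\bigr)$, which is exactly $\tfrac{1}{f^{n+1}}$ times the orienting volume form from \cref{lemma:contactnbd}; since $f > 0$ on $R_+$ this is positive, so $d\lambda_+$ is symplectic there. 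The identical computation on $R_-$ gives $(d\lambda_-)^n = \tfrac{1}{f^{n+1}}(\text{positive volume form})$ with $f < 0$, so $(d\lambda_-)^n$ has sign $(-1)^{n+1}$ — precisely why one equips $R_-$ with orientation $(-1)^{n+1}$ to make $(-1)^{n+1}\overline{R}_-$ an ideal Liouville domain. The contact-form-at-the-boundary check for $R_-$ is the same as for $R_+$ but with $u = -f$ and an extra sign bookkeeping that the $(-1)^{n+1}$ orientation absorbs; I would state this carefully and then say the negative case is symmetric.

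Finally I would identify the characteristic foliation. The characteristic foliation of $\Sigma$ (as $\Sigma\times\{0\}$ in the contact manifold) is $\ker(\iota_\Sigma^*(f\,dt+\beta)) = \ker \beta$ as a line field (where $\beta$ has rank $2n - 1$, equivalently wherever $f \neq 0$), oriented by the contact structure. On $R_+$ the ideal Liouville vector field $X_{\lambda_+}$ is defined by $X_{\lambda_+}\lrcorner\,d\lambda_+ = \lambda_+ = \tfrac1f\beta$, so $X_{\lambda_+}$ lies in $\ker\lambda_+ = \ker\beta$; hence it directs the characteristic foliation, and one checks the orientations agree. On $R_-$ the same computation shows $X_{\lambda_-}\in\ker\beta$, but the orientation reversal built into working with $(-1)^{n+1}\overline{R}_-$ flips the sign, so the characteristic foliation of $R_-$ is directed by $-X_{\lambda_-}$. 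The main obstacle I anticipate is not any single computation but the sign/orientation bookkeeping: keeping straight which orientation $R_\pm$ carries, how $\partial\overline{R}_\pm$ is co-oriented, and how the $(-1)^{n+1}$ factor interacts with the contact condition so that $\iota^*\lambda_\pm$ comes out a \emph{positive} contact form and $-X_{\lambda_-}$ (not $X_{\lambda_-}$) is the correct foliation generator. I would handle this by reducing everything to the explicit model $\alpha = \tau\,dt + \alpha_\Gamma$ near $\Gamma$ from \cref{lemma:dividingsetnormal}, where all signs can be read off directly, and then invoking \cite[Proposition 2]{giroux2020ideal} for the orientation-independence of the induced contact structure on $\partial\overline{R}_\pm$.
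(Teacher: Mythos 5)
The paper does not prove this proposition; it is quoted from \cite{giroux1991convexite} without argument, so there is no in-paper proof to compare against. Your verification is essentially the standard one and its core is correct: taking $u=f$ on $\overline{R}_+$ (resp.\ $u=-f$ on $\overline{R}_-$) as the boundary-defining function, observing that $u\lambda_\pm=\pm\beta$ extends across $\Gamma$ and restricts there to a contact form by the contact condition evaluated at $f=0$, and the key identity $(d\lambda_\pm)^n=f^{-(n+1)}\bigl(f(d\beta)^n-n\,df\wedge\beta\wedge(d\beta)^{n-1}\bigr)$, which together with \cref{lemma:contactnbd} gives nondegeneracy on the interior and explains the $(\pm1)^{n+1}$ orientation. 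The boundary-regularity and sign bookkeeping you defer to the normal form of \cref{lemma:dividingsetnormal} and to \cite[Proposition 2]{giroux2020ideal} is handled appropriately.

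The one genuine gap is in the characteristic foliation step. You write that $X_{\lambda_+}\in\ker\lambda_+=\ker\beta$ and conclude that it therefore directs the characteristic foliation. But $\ker\beta$ is a hyperplane field of rank $2n-1$; it only coincides with a line field when $2n=2$, so for $2n\geq 4$ membership in $\ker\beta$ does not single out the characteristic direction. The missing (short) computation is that $X_{\lambda_\pm}\lrcorner\,\iota_\Sigma^*d\alpha$ is proportional to $\iota_\Sigma^*\alpha=\beta$: from $d\beta=f\,d\lambda_\pm+f^{-1}df\wedge\beta$ one gets $X_{\lambda_\pm}\lrcorner\,d\beta=\bigl(1+f^{-1}df(X_{\lambda_\pm})\bigr)\beta$, which is exactly the condition characterizing the line $(T\Sigma\cap\xi)^{\perp}$. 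With that line added, together with an explicit orientation check at a nondegenerate singular point (or in the model of \cref{lemma:dividingsetnormal}) to justify the sign $-X_{\lambda_-}$ on $R_-$ rather than merely asserting it, the argument is complete.
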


\begin{definition}
Let $\Sigma \subset (M,\xi=\ker \alpha)$ be a co-oriented (not necessarily convex) hypersurface in a co-oriented contact manifold. The \textbf{characteristic foliation} is the oriented singular foliation given by $\Sigma_{\xi}:= (T\Sigma \cap \xi)^{\bot}$, where $\bot$ is the symplectic orthogonal complement given by the conformal symplectic structure on $\xi$.     
\end{definition}

The orientation of the foliation is induced by the orientation of $\Sigma$ and the co-orientation of $\xi$. Singular points $p$ of the characteristic foliation $\Sigma_{\xi}$, i.e., points where $T_p\Sigma = \xi_p$, are, by definition, positive (resp. negative) according to when the co-orientation of the contact plane $\xi_p$ agrees (resp. disagrees) with the co-orientation of $T_p\Sigma$. 

\begin{definition}[\cite{honda2019convex}]
Let $(M, \xi=\ker \alpha)$ be a co-oriented contact manifold. A \textbf{Weinstein convex hypersurface} is a tuple $(\Sigma, \phi)$ where $\Sigma \subset M$ is a convex hypersurface and $\phi:\Sigma \to \R$ is a Morse function for which the characteristic foliation $\Sigma_{\xi}$ is gradient-like (by which we mean there is some vector field $Y$ directing $\Sigma_{\xi}$ which is gradient-like for $\phi$).   
\end{definition}

The following fact then follows from \cref{prop:CHT_ideal}, which justifies the language of the previous definition. 

\begin{proposition}
Let $(\Sigma \times \R_t, \ker(f\, dt + \beta))$ be a vertically-invariant neighborhood of a Weinstein convex hypersurface $(\Sigma, \phi)$. Let $\lambda_{\pm}:= \frac{1}{f}\, \beta\mid_{R_{\pm}}$ and let $\phi_{\pm}:=\pm \phi\mid_{R_{\pm}}$. Then $((\pm 1)^{n+1}R_{\pm}, \lambda_{\pm}, \phi_{\pm})$ is a completed finite-type Weinstein manifold and $((\pm 1)^{n+1}\overline{R}_{\pm}, d\lambda_{\pm})$ is its ideal completion. 
\end{proposition}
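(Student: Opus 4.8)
The plan is to read off everything from \cref{prop:CHT_ideal} and the definition of a Weinstein convex hypersurface, with the ideal completion construction above doing the work of turning ``ideal Liouville domain'' into ``completed finite‑type Liouville/Weinstein manifold''. The Liouville part is immediate: \cref{prop:CHT_ideal} says $\big((\pm 1)^{n+1}\overline{R}_{\pm},\, d\lambda_{\pm}\big)$ is an ideal Liouville domain with ideal Liouville form $\lambda_{\pm}$. Running the ideal completion example in reverse — pick a regular value $\ve>0$ of the defining function $u$ close to $0$, truncate at $\{u=\ve\}$, and note that $X_{\lambda_{\pm}}$ is outwardly transverse to $\{u=\ve\}$ and complete on the collar $\{0<u<\ve\}$ — exhibits $\big((\pm 1)^{n+1}R_{\pm},\,\lambda_{\pm}\big)$ as the completion of the compact Liouville domain $\{u\ge \ve\}$, so the underlying Liouville structure is that of a completed finite‑type Liouville manifold.

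Next I would transfer the Morse function. By hypothesis there is a vector field $Y$ on $\Sigma$ directing the characteristic foliation $\Sigma_{\xi}$ and gradient‑like for $\phi$. By \cref{prop:CHT_ideal}, on $R_+$ the oriented line field $\Sigma_{\xi}$ is directed by $X_{\lambda_+}$ and on $R_-$ by $-X_{\lambda_-}$, so on $R_{\pm}$ we may write $Y=h_{\pm}\,(\pm X_{\lambda_{\pm}})$ for a function $h_{\pm}$; since $Y$ and $\pm X_{\lambda_{\pm}}$ are positive sections of the same oriented $1$‑dimensional foliation away from its singular points, and a short computation in Morse coordinates shows the ratio extends positively across those points, $h_{\pm}>0$. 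A positive reparametrization of a gradient‑like vector field is again gradient‑like for the same function (immediate off the zero set, and near a zero one rescales the Morse chart to absorb the value of the reparametrizing function), so $\pm X_{\lambda_{\pm}}$ is gradient‑like for $\phi\mid_{R_{\pm}}$, i.e.\ $X_{\lambda_{\pm}}$ is gradient‑like for $\phi_{\pm}=\pm\phi\mid_{R_{\pm}}$. The critical points of $\phi$ are precisely the zeros of $Y$, i.e.\ the singular points of $\Sigma_{\xi}$; by \cref{lemma:dividingsetnormal} the characteristic foliation is nonsingular near $\Gamma$ (there it is directed by a Reeb‑type field of $\alpha_{\Gamma}$), so all critical points lie in $R_+\cup R_-$ and are finite in number. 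Hence $\phi_{\pm}$ is a Morse function on $R_{\pm}$ with $X_{\lambda_{\pm}}$ gradient‑like and all critical points in the compact core.

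Finally, $\phi_{\pm}$ is bounded, so to get an exhausting function I would modify it near the ideal boundary. On the collar $\{0<u<\ve\}\cong \Gamma\times(0,\infty)_s$ there are no critical points and $X_{\lambda_{\pm}}$ is a positive multiple of $\partial_s$; replacing $\phi_{\pm}$ by a function that agrees with it on $\{u\ge\ve\}$ and equals $s$ near infinity, interpolated across the critical‑point‑free collar, produces an exhausting Morse function still having $X_{\lambda_{\pm}}$ gradient‑like. This makes $\big((\pm 1)^{n+1}R_{\pm},\lambda_{\pm},\phi_{\pm}\big)$ a completed finite‑type Weinstein manifold, a large regular sublevel set of which is a Weinstein domain; applying the ideal completion construction to that domain returns $\big((\pm 1)^{n+1}\overline{R}_{\pm},d\lambda_{\pm}\big)$ (the collar structures match by the normal form of $\lambda_{\pm}$ near $\Gamma$), so the latter is its ideal completion.

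\textbf{Main obstacle.} Nothing here is deep; the real work is bookkeeping. The fussiest points are: keeping the orientation reversal on $R_-$ consistent (so that $d\lambda_-$ is genuinely symplectic on $(-1)^{n+1}R_-$ and $\phi_-=-\phi\mid_{R_-}$ is the right sign), verifying that the positive reparametrization $Y\rightsquigarrow \pm X_{\lambda_{\pm}}$ exists and extends positively across the critical points, and checking that $\phi$ can be altered near the ideal boundary without destroying the gradient‑like property. One should also confirm that the ``directed by'' statement in \cref{prop:CHT_ideal} is with the coorientation of $\Sigma_{\xi}$, which is what forces $h_{\pm}>0$ rather than merely $h_{\pm}\neq 0$.
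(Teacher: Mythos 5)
Your proposal is correct and takes essentially the same route as the paper, which states this proposition without proof as an immediate consequence of \cref{prop:CHT_ideal}; your elaboration --- reading the cylindrical end off the ideal Liouville structure near $\Gamma$ and transferring the gradient-like condition from the directing field $Y$ to $\pm X_{\lambda_{\pm}}$ by positive reparametrization --- supplies exactly the bookkeeping the paper leaves implicit. Your observation that $\phi_{\pm}$ is bounded and must be modified on the critical-point-free collar to become exhausting is a fair gloss on the statement, since the paper never formally defines ``completed finite-type Weinstein manifold'' and clearly intends this harmless adjustment.
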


\begin{example}[The standard convex sphere]\label{ex:convex_sphere}
Let $(M, \xi) = (\R^{2n+1}, \ker \alpha)$, where 
\[
\alpha = dz + \frac{1}{2}\sum_{j=1}^{2n} (x_j\, dy_j - y_j\, dx_j)
\]
is the standard radial contact form, and let $S^{2n}=\{\norm{\mathbf{x}}^2 + \norm{\mathbf{y}}^2 + z^2 = 1\}$ be the unit sphere with outward co-orientation. Let 
\[
X = z\, \partial_z + \frac{1}{2}\sum_{j=1}^n (x_j\, \partial_{x_j} + y_j\, \partial_{y_j})
\]
be the radial vector field. Note that $\mathcal{L}_{z\, \partial_z}(dz) = dz$ and 
\[
\mathcal{L}_{\frac{1}{2}(x_j\, \partial_{x_j} + y_j\, \partial_{y_j})}\frac{1}{2}(x_j\, dy_j - y_j\, dx_j) = \frac{1}{2}(x_j\, \partial_{x_j} + y_j\, \partial_{y_j})  \, \lrcorner\,  (dx_j\wedge dy_j) = \frac{1}{2}(x_j\, dy_j - y_j\, dx_j)
\]
and so $\mathcal{L}_X\alpha = \alpha$. Thus, $X$ is a (strict) contact vector field and $S^{2n}$ is a convex hypersurface with respect to $X$. Since $\alpha(X) = z$, the dividing set is $\Gamma = \{z=0\}\cap S^{2n}$ and the positive (resp. negative) region is the hemisphere $R_{\pm} = \{\pm z > 0\}\cap S^{2n}$. With $\phi = -z$, $(S^{2n}, \phi)$ is a Weinstein convex hypersurface. 
\end{example}

\begin{example}[Rounded contact handlebody]
More generally, if $(V,\lambda)$ is a Liouville domain, we may consider a compact \textit{contactization} $(V\times [-1,1]_t, \alpha = dt + \lambda)$. One may round the edges of the resulting contact manifold to obtain a smooth boundary $\Sigma$ which is naturally convex and has a dividing set contactomorphic to $\partial V$. See \cite{avdek2012liouville} for a careful description of the rounding.    
\end{example}

\subsection{Lefschetz fibrations}

Here we review the theory of Lefschetz fibrations. This is necessary for the statement and proof of \cref{thm:lefschetz}.

\begin{definition}\label{def:WLF}
A \textbf{Weinstein Lefschetz fibration} is a pair $(p:W^{2n} \to \D^2, \lambda)$ satisfying the following properties: 
\begin{enumerate}
    \item The manifold $W^{2n}$ is a compact domain with corners which admits a smoothing $W^{\mathrm{sm}}$ and a Morse function $\phi:W^{\mathrm{sm}} \to \R$ such that the \textbf{total space} $(W^{\mathrm{sm}}, \lambda, \phi)$ is a Weinstein domain.
    \item The map $p:W \to \D^2$ is a smooth fibration except at finitely many critical points in the interior of $W$ with distinct critical values, around which there are local holomorphic coordinates such that 
    \begin{align*}
        p(z) &= p(z_0) + \sum_{j=1}^n z_j^2, \\
        \lambda &= i\sum_{j=1}^n (z_j\, d\overline{z}_j -\overline{z}_j \,dz_j).
    \end{align*}
    \item The boundary $\partial W$ decomposes as  
    \[
    \big[\partial_{\mathrm{vert}} W := p^{-1}(\partial \D^2)\big] \, \cup \, \big[\partial_{\mathrm{hor}} W := \bigcup_{x\in \D^2} \partial ( p^{-1}(x))\big],
    \]
    where the two pieces meet at a codimension-$2$ corner, and $p|_{\partial_{\mathrm{vert}} W}$ and $p|_{\partial_{\mathrm{hor}} W}$ are fibrations.
    \item On each regular fiber, $\lambda$ induces a Weinstein structure such that the contact form $\lambda\mid_{\partial (p^{-1}(x))}$ is independent of $x\in \D^2$, and finally $d\lambda$ is nondegenerate on $\ker dp(z)$ for all $z\in W$ (not just regular points). 
\end{enumerate}
\end{definition}

Choose a regular basepoint $\bullet \in \D^2$ and let $W_0 := p^{-1}(\bullet)$ be a distinguished regular fiber. After isotoping the critical values $x_1, \dots, x_k$ to be approximately radially distributed around $\bullet$, we get a cyclically ordered tuple of Lagrangian spheres in $W_0$ by identifying Lefschetz thimbles over paths $\gamma_j$ that are approximately straight lines from $x_j$ to $\bullet$. This yields: 

\begin{definition}\label{def:AWLF}
An \textbf{abstract Weinstein Lefschetz fibration} is the data 
\[
\left((W_0^{2n-2}, \lambda_0, \phi_0);\, \mathcal{L} = (L_1, \dots, L_k)\right),
\]
abbreviated $(W_0; \mathcal{L})$, where $(W_0^{2n-2}, \lambda_0, \phi_0)$ is a Weinstein domain and $\mathcal{L}$ is an ordered $k$-tuple of exact parameterized Lagrangian ($n-1$)-spheres (possibly duplicated), called the \textbf{vanishing cycles}, embedded in $W_0$. The \textbf{total space of $(W_0; \mathcal{L})$}, denoted $|W_0; \mathcal{L}|$, is the $2n$-dimensional Weinstein domain obtained by attaching critical handles to $(W_0\times \D^2, \lambda_0 + \lambda_{\mathrm{st}}, \phi_0 + \phi_{\mathrm{st}})$ along Legendrian lifts $\Lambda_j\subseteq W_0\times \partial \D^2$, $j=1, \dots, k$, of $L_j$ positioned near $2\pi j/k \in \partial \D^2$.
\end{definition}

The total space $|W_0; \mathcal{L}|$ of an abstract Weinstein Lefschetz fibration naturally admits a Weinstein Lefschetz fibration $p:W\to \D^2$ with $W^{\mathrm{sm}}= |W_0; \mathcal{L}|$ in the sense of \cref{def:WLF}. We refer to Giroux and Pardon \cite[\S 6]{giroux2017lefschetz} for more details on translating between the two notions. 

Note that (3) of \cref{def:WLF} gives a \textit{supporting (strongly Weinstein) open book decomposition} of the contact manifold $(M, \xi):= \partial W^{\mathrm{sm}}$. A neighborhood of the binding is given by $\partial_{\mathrm{hor}}W$ while the partial mapping torus of the page is $\partial_{\mathrm{vert}}W$. The abstract description of the open book is easily obtained from the data of an abstract Weinstein Lefschetz fibration. Namely, given $(W_0; \mathcal{L})$, the induced abstract open book on the boundary has page $W_0$ and monodromy 
\[
\tau_{\mathcal{L}}:= \tau_{L_k} \circ \cdots \circ \tau_{L_1}
\]
where $\tau_{L_j}:W_0 \to W_0$ is the positive symplectic Dehn twist around the exact Lagrangian sphere $L_{j}\subset W_0$. 

Every Weinstein domain, up to homotopy, admits a description as the total space of a Weinstein Lefschetz fibration. 

\begin{theorem}[\cite{giroux2017lefschetz}]\label{thm:GP17}
Let $(W, \lambda, \phi)$ be a Weinstein domain. Then there is an abstract Weinstein Lefschetz fibration $(W_0; \mathcal{L})$ whose total space $|W_0; \mathcal{L}|$ is Weinstein homotopic to $(W,\lambda, \phi)$.     
\end{theorem}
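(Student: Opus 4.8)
The plan is to build the Lefschetz fibration inductively from a Weinstein handle decomposition of $(W,\lambda,\phi)$, reading off the abstract data $(W_0;\mathcal L)$ of \cref{def:AWLF} handle by handle. After a Weinstein homotopy we may assume $X_\lambda$ is gradient-like for a Morse function whose critical points all have index $\le n$ (the stable manifolds of $X_\lambda$ are isotropic, hence of dimension $\le n$), and that the subcritical handles, of index $<n$, are attached before the critical ones, of index $n$. The base case is $W=D^{2n}$, which is $|D^{2n-2};\emptyset|=D^{2n-2}\times D^2$. There are then two inductive moves to understand: attaching a subcritical handle, and attaching a critical handle.

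\emph{Subcritical handles.} If $W\simeq|W_0;\mathcal L|$ and we attach an index-$k$ handle with $k\le n-1$ to the total space, I claim the result is Weinstein homotopic to $|W_0\cup h^k;\mathcal L|$ — the handle goes onto the page and the vanishing cycles are unchanged. Since $|W_0;\mathcal L|=(W_0\times D^2)\cup(\text{critical handles})$, it suffices to isotope the isotropic attaching sphere of $h^k$ to a product sphere $\Lambda'\times\{\mathrm{pt}\}\subseteq\partial W_0\times\partial D^2$, after which the attachment commutes with forming $W_0\times D^2$; this isotopy exists by the $h$-principle for subcritical isotropic embeddings once the formal homotopy-theoretic data are matched, which can be arranged by choosing $\Lambda'$ in the correct homotopy class. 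In fact one can dispatch the whole subcritical part at once: by Cieliebak's splitting theorem the subcritical subdomain $W^{\mathrm{sub}}\subseteq W$ is Weinstein homotopic to $P\times\C$ for a Weinstein domain $P^{2n-2}$, so $W^{\mathrm{sub}}\simeq|P;\emptyset|$.

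\emph{Critical handles.} Suppose that after processing the subcritical part and the first $i$ critical handles we have $W_i\simeq|W_0^{(i)};(L_1,\dots,L_i)|$, so that $\partial W_i$ carries the supporting open book with page $W_0^{(i)}$ and monodromy $\tau_{L_i}\circ\cdots\circ\tau_{L_1}$. The $(i{+}1)$st critical handle is attached along a Legendrian $(n-1)$-sphere $\Lambda_{i+1}\subseteq(\partial W_i,\xi_i)$. By Giroux's open book calculus — after a contact isotopy and, if necessary, positively stabilizing the open book — we may assume $\Lambda_{i+1}$ lies in a single page as a parameterized exact Lagrangian sphere $L_{i+1}$. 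A positive stabilization lifts to a Weinstein-homotopy-trivial modification of the total space: it enlarges the page by a subcritical $(n-1)$-handle and appends a new vanishing cycle equal to the belt sphere of that handle, and this critical–subcritical pair cancels. Once $\Lambda_{i+1}$ is realized as a page Lagrangian, comparing the local Weinstein handle model with move (2) of \cref{def:WLF} shows $W_{i+1}=W_i\cup h^n_{i+1}\simeq|W_0^{(i+1)};(L_1,\dots,L_i,L_{i+1})|$, possibly with extra stabilization pairs inserted into the tuple. Iterating exhausts the critical handles and gives $W\simeq|W_0;\mathcal L|$.

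The main obstacle is the critical-handle step together with its bookkeeping: one must (a) realize an arbitrary Legendrian attaching sphere as an \emph{exact}, correctly parameterized Lagrangian sphere in a page, so that the associated Dehn twist is the \emph{positive} one; (b) verify that the positive stabilizations needed for (a) correspond to modifications of the total space that are trivial up to Weinstein homotopy, and that they can be inserted without disturbing earlier cancellations; and (c) concatenate all the intermediate homotopies into a single Weinstein homotopy of the final structure. A more streamlined but less self-contained alternative is to replace $(W,\lambda,\phi)$ by a Weinstein-homotopic Stein domain (Cieliebak–Eliashberg) and construct the Lefschetz fibration complex-analytically on the Stein side; but the handle-theoretic argument above is the route I would take, since it uses only Weinstein handle calculus, Cieliebak's splitting theorem, and Giroux's open book correspondence.
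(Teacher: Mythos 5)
First, a point of comparison: the paper does not prove \cref{thm:GP17} at all --- it is imported verbatim from Giroux--Pardon \cite{giroux2017lefschetz}. So the relevant benchmark is their argument, and it is essentially the route you dismiss in your last sentence as the ``less self-contained alternative'': they replace $(W,\lambda,\phi)$ by a Weinstein-homotopic Stein domain (via Cieliebak--Eliashberg \cite{cieliebak2012stein}) and then run Donaldson-style quantitative transversality for asymptotically holomorphic sections to produce the Lefschetz fibration complex-analytically, afterwards translating back to the abstract Weinstein data. They do \emph{not} proceed by Weinstein handle induction, precisely because the step your argument leans on is not available as ``calculus'' in dimensions $2n\geq 6$.

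That step is the genuine gap in your primary route. At the critical-handle stage you assert that, ``by Giroux's open book calculus,'' the Legendrian attaching sphere $\Lambda_{i+1}\subseteq(\partial W_i,\xi_i)$ can be isotoped into a page of the supporting open book, as an exact \emph{parameterized} Lagrangian sphere, after positive stabilizations. In dimension $3$ this is classical (Giroux, and the Legendrian-realization results that followed), but in higher dimensions ``put the Legendrian on a page after stabilizing'' is a hard theorem, not a routine move: it requires the higher-dimensional Giroux correspondence and convex hypersurface technology of the sort only recently established (e.g.\ \cite{BHH23}, which this paper itself invokes for exactly the stabilization-equivalence statement it needs in the proof of \cref{thm:lefschetz}). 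Even granting page realization, you still owe: that the Weinstein handle framing of $\Lambda_{i+1}$ agrees with the framing induced by the page Lagrangian (so that the attachment really is move (2) of \cref{def:WLF} and contributes a \emph{positive} Dehn twist), and that exactness and the parameterization can be arranged. You correctly flag (a)--(c) as the obstacles, but the proposal offers no mechanism for overcoming them, and ``Giroux's open book calculus'' is not one in this dimension range. The subcritical part of your argument (Cieliebak's splitting theorem, giving $W^{\mathrm{sub}}\simeq|P;\emptyset|$) is fine. If you want a self-contained proof of \cref{thm:GP17}, the honest path is the Donaldson-theoretic one of \cite{giroux2017lefschetz}; the handle-theoretic induction you sketch is closer in spirit to how one \emph{uses} the theorem (as in \cref{subsec:lefschetzproof}) than to how it is proved.
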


There are various symmetries of Weinstein Lefschetz fibrations that leave the total space invariant. Most relevant for our purposes is a \textit{stabilization} operation.

\begin{definition}
Let $(W_0; \mathcal{L})$ be a Weinstein Lefschetz fibration. Let $D\subset W_0$ be a properly embedded regular Lagrangian disk. The \textbf{stabilization of $(W_0; \mathcal{L})$ along $D$} is $(W_0 \cup h^{2n-2}_{n-1}; L \cup \mathcal{L})$ where $h_{n-1}^{2n-2}$ is a critical Weinstein handle attached to $W_0$ along $\partial D$, and $L$ is the exact Lagrangian sphere which is the union of $D$ with the core of $h_{n-1}^{2n-2}$.    
\end{definition}

If $(W_0'; \mathcal{L}')$ is a stabilization of $(W_0; \mathcal{L})$, then the total space $|W_0'; \mathcal{L}'|$ is Weinstein homotopic to $|W_0; \mathcal{L}|$. Moreover, a Lefschetz fibration stabilization induces a positive stabilization of the boundary open book decomposition, and conversely, any positive stabilization of the boundary open book decompositon induces a stabilization of the Lefschetz fibration. 

\section{Exact folded symplectic forms}\label{sec:exact}

The role of this section is to provide the definitions needed to make our main results \cref{theorem:main0}, \cref{theorem:main}, and \cref{thm:lefschetz} precise. We first establish some language, constructions, and observations about exact folded forms, and then extend the discussion to folded Lefschetz fibrations. 

\subsection{Exact folded forms}

Many definitions from the setting of exact symplectic forms port directly over to the folded setting.

\begin{definition}
Let $\omega=d\lambda$ be an exact folded symplectic form on $\Sigma$ with fold $\Gamma$. The \textbf{Liouville vector field $X_{\lambda}$} is the unique vector field defined on $\Sigma \setminus \Gamma$ by the relation $X_{\lambda} \lrcorner \omega = \lambda$.    
\end{definition}

\begin{definition}\label{def:CTliouville}
Let $\omega$ be a an exact folded symplectic form on $\Sigma$ with fold $\Gamma$, and let $\iota:\Gamma \hookrightarrow \Sigma$ be the inclusion. Recall that $\Gamma$ is naturally oriented as $\partial\overline{R}_+$. We say that $\lambda\in \Omega^1(\Sigma)$ is a \textbf{positive contact-type Liouville form} for $\omega$ if $\omega = d\lambda$ and $\iota^*\lambda$ is a positive contact form on $\Gamma$. 
\end{definition}

\begin{example}\label{ex:darboux}
Consider the local Darboux model $\R^{2n}_{(x_1, y_1 ,\dots, x_n, y_n)}$ with fold $\Gamma = \{y_1=0\}$ and 
\[
\omega = y_1\, dx_1\wedge dy_1 + \sum_{j=2}^n dx_j \wedge dy_j.   
\]
Define two $1$-forms $\tilde{\lambda}$ and $\lambda$ as given to the left below: 
\begin{align*}
\tilde{\lambda} &:= -\frac{y_1^2}{2}\, dx_1 - \sum_{j=2}^n y_j\, dx_j, & X_{\tilde{\lambda}} &= \frac{1}{2}y_1\, \partial_{y_1} + \sum_{j=2}^n y_j\, \partial_{y_j},\\
\lambda &:= \left(1 -\frac{y_1^2}{2}\right)\, dx_1 - \sum_{j=2}^n y_j\, dx_j, & X_{\lambda} &= \left(\frac{y_1^2 - 2}{2y_1}\right)\, \partial_{y_1} + \sum_{j=2}^n y_j\, \partial_{y_j}.
\end{align*}
Note that $\omega = d\tilde{\lambda} = d\lambda$ and hence $\tilde{\lambda}$ and $\lambda$ are both Liouville forms for $\omega$. The Liouville vector fields are computed to the right above. However, $\tilde{\lambda}$ is not a positive contact-type Liouville form for $\omega$, while $\lambda$ is a positive contact-type Liouville form. Indeed, 
\begin{align*}
    \iota^*\tilde{\lambda} &= -\sum_{j=2}^n y_j\, dx_j, \\
\iota^*\lambda &= dx_1 - \sum_{j=2}^n y_j\, dx_j.
\end{align*}
The latter is a positive contact form on $\Gamma = \partial \{y_1\geq 0\}$, while the former is not a contact form at all. Observe that for the positive contact-type Liouville form $\lambda$, the Liouville vector field is dominated by $-\frac{1}{y_1}\, \partial_{y_1}$ near $\Gamma$; see \cref{fig:contacttype}. 
\end{example}

\begin{figure}[ht]
	\begin{overpic}[scale=0.3]{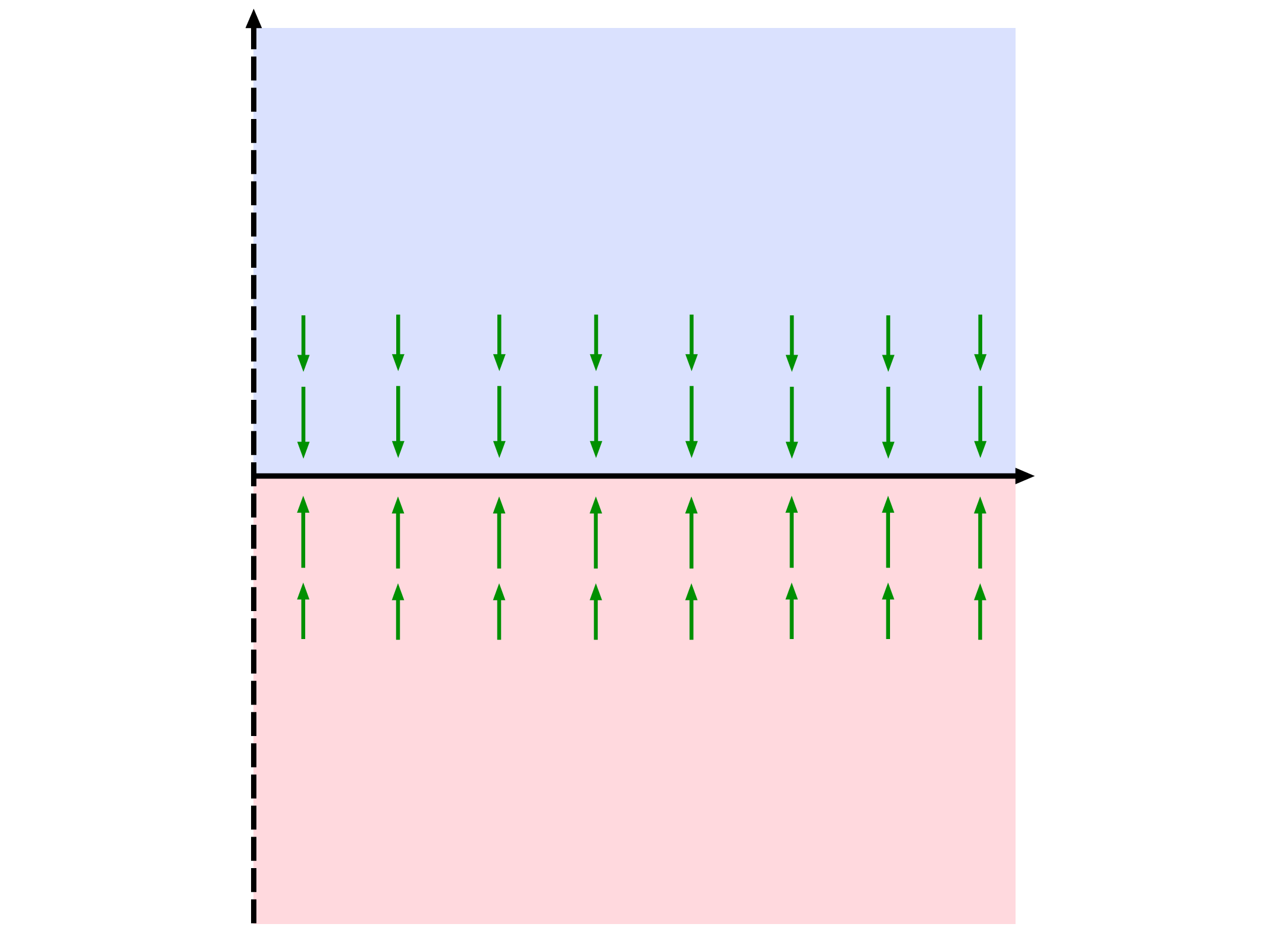}
     \put(83,37){\small $\Gamma$}
     \put(49,56){\small \textcolor{darkblue}{$R_+$}}
     \put(49,16){\small \textcolor{darkred}{$R_-$}}
     \put(65,21){\small \textcolor{darkgreen}{$X_{\lambda}$}}
     \put(18.5,75.5){\small $y_1$}
	\end{overpic}
	\caption{The local structure of a positive contact-type Liouville form near a fold.}
	\label{fig:contacttype}
\end{figure}

\begin{example}\label{ex:sphere}
Consider the standard folded symplectic structure on $S^{2n}$ given as follows (see, for instance, \cite[Section 3]{cannas2000unfolding}). We view $S^{2n} \subset \R^{2n+1}_{(x_1, y_1 ,\dots, x_n, y_n, z)}$ as the unit sphere. Let $\pi: S^{2n} \to \R^{2n}_{(x_1, y_1 ,\dots, x_n, y_n)}$ be the projection, and define $\omega\in \Omega^2(S^{2n})$ by $\omega := \pi^*\left(\sum_{j=1}^n dx_j \wedge dy_j\right)$. Then $\omega$ is a folded symplectic form with fold $\Gamma = S^{2n}\cap \{z=0\}$. Let 
\[
\lambda := \pi^*\left(\frac{1}{2} \sum_{j=1}^n x_j\, dy_j - y_j\, dx_j\right).
\]
Then $\omega = d\lambda$, and moreover $\lambda$ is a positive contact-type Liouville form for $\omega$ inducing the standard tight contact structure on $\Gamma \cong S^{2n-1}$. See for comparison the standard convex sphere from \cref{ex:convex_sphere}.
\end{example}

More generally, one can double a Liouville cobordism to obtain an exact folded symplectic manifold. 

\begin{example}[Double of a Liouville cobordism]\label{ex:double}
Let $(W^{2n}, d\lambda)$ be a Liouville cobordism with negative (concave) end $\partial_- W$ and positive (convex) end $\partial_+ W$. Let $\alpha_{\pm}$ be the induced contact form on $\partial_{\pm} W$. There are collar neighborhoods $N(\partial_{+} W)$ and $N(\partial_{-} W)$ such that 
\begin{align*}
    &N(\partial_+ W) \cong (-\ve, 0]_{s} \times \partial_+ W\, \text{ with }\, \lambda = e^s\, \alpha_+, \\
    &N(\partial_- W) \cong [0,\ve)_{s} \times \partial_- W\,\text{ with }\, \lambda = e^s\, \alpha_-.
\end{align*}
Let $\Sigma^{2n} \subset \R \times W$ be the smooth submanifold defined as follows: 
\begin{itemize}
    \item[$\diamond$] Away from $\R \times N(\partial_{\pm} W)$, set $\Sigma := (\{1\} \times W) \, \cup\,  (\{-1\}\times -W)$. 
    \item[$\diamond$] In $\R_z \times N(\partial_+ W)$, let $\Sigma:=\{s = f_+(z)\}$ where $f_+:(-1,1)_z \to (-\ve, 0]_s$ is a smooth function satisfying $f_+(0) = f_+'(0) = 0$, $f_+''(z)<0$, and $f_+(z) \to -\ve$ and $f'_+(z) \to \pm\infty$ as $z \to \mp 1$.
    \item[$\diamond$] Likewise, in $\R_z \times N(\partial_- W)$ let $\Sigma:=\{s=f_-(z)\}$ where $f_-: (-1,1)_z \to [0,\ve)_s$ is a smooth function satisfying $f_-(0) = f'_-(0) = 0$, $f_-''(z) > 0$, and $f_-(z) \to \ve$ and $f'_-(z) \to \pm \infty$ as $z \to \pm 1$. 
\end{itemize}
See the left side of \cref{fig:double}. Let $\pi:\Sigma \subset \R \times W \to W$ be the projection and define $\omega := \pi^*(d\lambda)$. Then $\omega$ is an exact folded symplectic form on $\Sigma$ with fold $\Gamma = \Sigma \cap \{z=0\}$. Each of $R_+$ and $R_-$ are essentially copies of the interior of the original Liouville cobordism. 

\begin{figure}[ht]
	\begin{overpic}[scale=0.4]{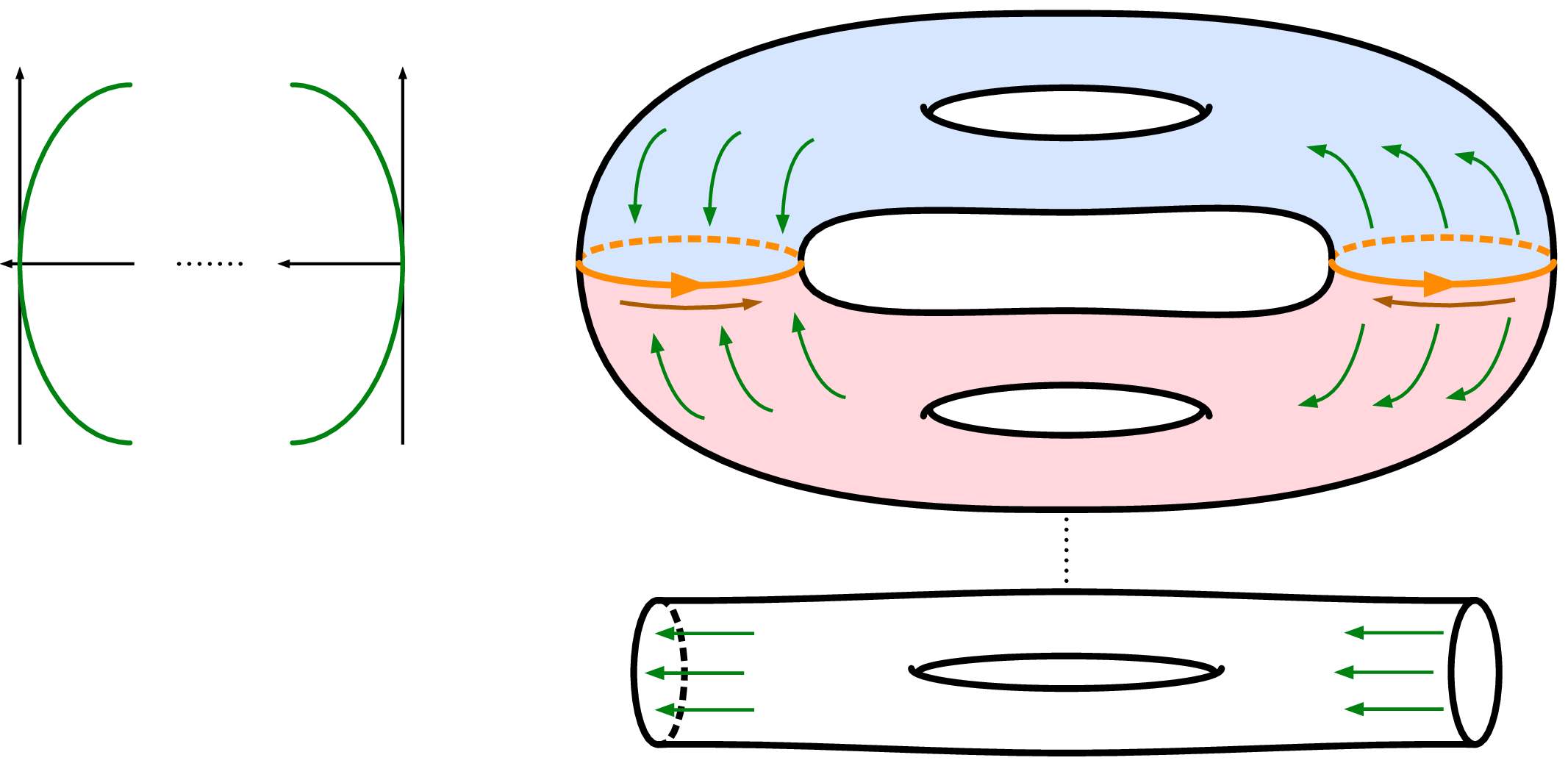}
     \put(53,31){\small \textcolor{orange}{$\Gamma$}}
     \put(50.5,20){\small \textcolor{darkgreen}{$X_{\pi^*\lambda}$}}
     \put(30,5){\small $(W,d\lambda)$}

     \put(0.75,45.25){\small $z$}
     \put(1,17.5){\small \textcolor{darkgreen}{$s=f_+(z)$}}
     \put(16,17.5){\small \textcolor{darkgreen}{$s=f_-(z)$}}
     \put(25.2,45.25){\small $z$}

     \put(-1.3,31.5){\small $s$}
     \put(16.5,31.5){\small $s$}
	\end{overpic}
	\caption{The double of a Liouville cobordism. The fold is oriented as the boundary of $\overline{R}_+$; this is depicted by the orange direction. The brown direction depicts the orientation induced by the contact form. Compare the local depiction of the Liouville vector fields with \cref{fig:contacttype}.}
	\label{fig:double}
\end{figure}

This example indicates the importance of the adjective \textit{positive} when talking about contact-type Liouville forms. While $\pi^*\lambda$ is a contact form on the fold $\Gamma$, if the original Liouville cobordism has a nonempty negative end $\partial_- W$, then $\pi^* \lambda$ will not be a \textit{positive} contact-type Liouville form. Indeed, the fold $\Gamma$ is oriented as the boundary of $\overline{R}_+$, but the contact form on the negative boundary is induced as the negative end of a Liouville cobordism and hence on this component of the fold the contact form will not be positive. See the right side of \cref{fig:double}.
\end{example}

\begin{definition}\label{def:folded_weinstein}
A \textbf{folded Weinstein structure} on $\Sigma$ is the data $(\lambda, \phi)$ where 
\begin{enumerate}
    \item $\omega:= d\lambda$ is an exact folded symplectic form with fold $\Gamma$ and positive (resp. negative) region $R_{\pm}$,
    \item $\lambda$ is a positive contact-type Liouville form for $\omega$, and 
    \item $\phi:\Sigma \to \R$ is a (generalized) Morse function such that $\Gamma = \phi^{-1}(0)$ is a regular level set and such that $X_{\lambda}\mid_{R_{\pm}}$ is gradient-like for $\pm \phi\mid_{R_{\pm}}$.
\end{enumerate}
The triple $(\Sigma, \lambda, \phi)$ is a \textbf{folded Weinstein manifold}. A \textbf{folded Weinstein homotopy} is simply a $1$-parameter family $(\Sigma, \lambda_t, \phi_t)$, $t\in [0,1]$, of folded Weinstein structures on $\Sigma$. 
\end{definition}

\begin{remark}\label{remark:retract_domain}
Let $\ve > 0$ be sufficiently small and let
\begin{align*}
  R_+^{\ve} &:= \phi^{-1}(-\infty,-\ve], \\
    R_-^{\ve} &:= \phi^{-1}[\ve,\infty) 
\end{align*}
be slight compact retracts of $R_{+}$ and $R_-$. Then $(R_+^{\ve}, \lambda, \phi)$ and $(-R_-^{\ve}, \lambda, -\phi)$ are both Weinstein domains. It is not correct to call $(\pm\overline{R}_{\pm},\lambda,\pm \phi)$ a Weinstein domain, as $\omega=d\lambda$ does not extend through the fold to a symplectic form. One could also ask if the open manifold $(\pm R_{\pm}, \lambda, \pm \phi)$ is a completed Weinstein manifold, or if its closure is an ideal Liouville domain, but this is also not the case --- note that (refer back to the local model of \cref{ex:darboux}) the Liouville vector field is not complete.    
\end{remark}

\subsubsection{Asymmetric double of two Weinstein domains}

The double of a Weinstein domain as in \cref{ex:double} is naturally a folded Weinstein manifold. More generally, we can form an asymmetric double of two Weinstein domains with contactomorphic boundaries. For the sake of precision, we give a careful description of the construction. See also \cite[Section 6]{cannas2000unfolding}.

Let $(W_+, \lambda_+, \phi_+)$ and $(W_-, \lambda_-, \phi_-)$ be two compact Weinstein domains of the same dimension. Assume moreover that the contact manifolds $(\partial W_{+}, \ker \alpha_{+}) \cong (\partial W_{-}, \ker \alpha_{-})$ are contactomorphic, where $\alpha_{\pm} = \iota_{\pm}^{*}\lambda_{\pm}$ is the induced contact form under the inclusion $\iota_{\pm}: \partial W_{\pm} \hookrightarrow W_{\pm}$. 

Let $\psi: (\partial W_-,\alpha_-) \to (\partial W_+, \alpha_+)$ be a contactomorphism, so that $\psi^*\alpha_+ = \mu\, \alpha_-$ for some $\mu: \partial W_- \to \R_{>0}$. We first wish to extend this contactomorphism to a Liouville isomorphism of collar neighborhoods of the boundaries. Using the Liouville flow on $W_-$, we may identify a compact collar neighborhood $N(\partial W_-) \cong [-\ve, 0]_s \times \partial W_-$ on which $\lambda_- = e^s\, \alpha_-$. On $W_+$, we first take the completion $\hat{W}_+$, let $C>\sup(\ve + |\ln \mu|)$ be a constant, and then identify a collar neighborhood $N(\partial W_+) \cong [-C, C]_s \times \partial W_+$ of the original boundary $\partial W_+ \subset \hat{W}_+$ on which $\lambda_+ = e^s\, \alpha_+$. Then define $\bar{\psi}:N(\partial W_-) \to N(\partial W_+)$ by 
\[
\bar{\psi}(s,p) := \left(s - \ln \mu, \, \psi(p)\right).
\]
We have 
\[
\bar{\psi}^*\lambda_+ = \bar{\psi}^*(e^s\, \alpha_+) = e^{s-\ln \mu}\, \psi^*\alpha_+ = e^s\cdot \frac{1}{\mu}\cdot \mu \cdot \alpha_- = e^s\, \alpha_- = \lambda_-. 
\]
and so $\bar{\psi}$ is a Liouville isomorphism onto its image. In particular, $\bar{\psi}$ identifies a collar neighborhood of $\partial W_-\subset W_-$ with a collar neighborhood of the Weinstein subdomain $W^{\flat}_+\subset \hat{W}_+$ of the completion of $W_+$ given by $W^{\flat}_+ := \hat{W}_+ \setminus \{s > -\ln \mu\}$. Here we endow $W_+^{\flat}$ with a Morse function $\phi_+^{\flat}: W_+^{\flat} \to \R$ that agrees with $\phi_+$ on $W_+ \setminus \{s> -C\}$, satisfies $d\phi(\partial_s) > 0$, and has $\partial W_+^{\flat}$ as a regular level set. Note that the domain $W^{\flat}_+$ is Weinstein homotopic to $W_+$. 

Next we mimic \cref{ex:double} with a few modifications. Let $f:(-1,1)_z \to [0,1]_s$ be a smooth function satisfying $f(0) = f'(0) = 1$, $f''(z) < 0$, and $f(z) \to 0$ and $f'(z) \to \pm \infty$ as $z \to \mp 1$. Consider the manifold $M_0 := \R_z \times [-\ve, 1]_s \times \partial W_-$ and define a submanifold $\Sigma_0\subset M_0$ by 
\[
\Sigma_0\,\, := \,\,\{s = f(z)\} \,\, \cup \,\, (\{1\}_z \times [-\ve, 0]_s \times \partial W_-)\,\, \cup \,\,(\{-1\}_z \times [-\ve, 0]_s \times \partial W_-).
\]
Define a closed manifold $\Sigma$ by $\Sigma:= \Sigma_0 \cup_{\mathrm{id}} W_- \cup_{\bar{\psi}} W_+^{\flat}$, where we glue $\Sigma_0$ and $W_-$ via the obvious map
\[
\mathrm{id}: \{-1\}_z \times [-\ve, 0]_s \times \partial W_- \to N(\partial W_-) \cong [-\ve, 0]_s \times \partial W_-
\]
and we glue $\Sigma_0$ and $W_+^{\flat}$ instead by the corresponding map 
\[
\bar{\psi}: \{1\}_z \times [-\ve, 0]_s \times \partial W_- \to \bar{\psi}(N(\partial W_-)) \subset W_+^{\flat}
\]
induced by the original contactomorphism. See \cref{fig:asym}. Note that $\Sigma$ is naturally an oriented manifold such that $W_+^{\flat}$ and $-W_-$ are naturally oriented submanifolds. That is, the orientation of $\Sigma$ agrees with the orientation of $W_+^{\flat}$ induced by $d\lambda_+$ and disagrees with the orientation of $W_-$ induced by $d\lambda_-$. 

\begin{figure}[ht]
	\begin{overpic}[scale=0.4]{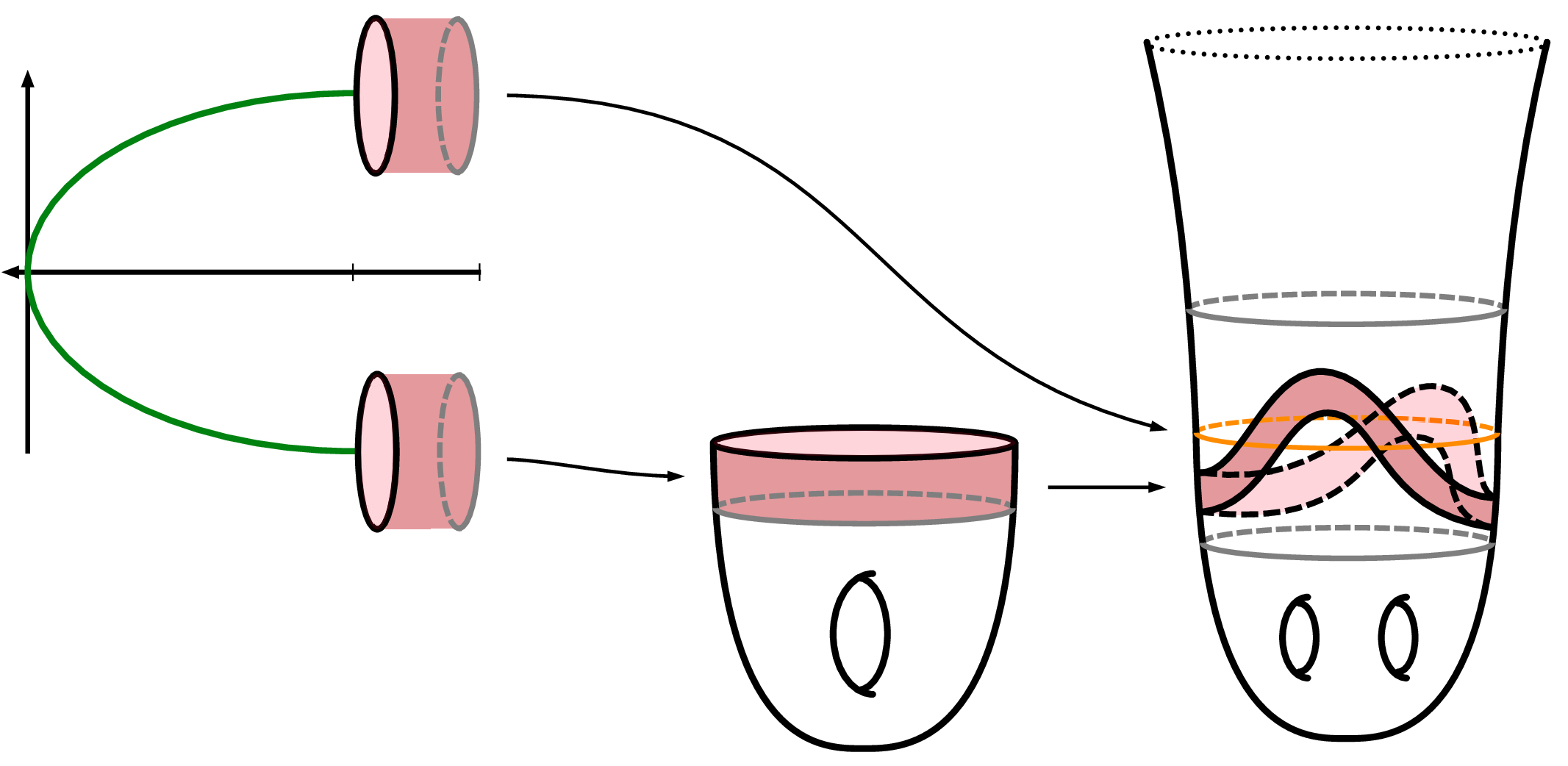}
     \put(96.5,20.5){\small \textcolor{orange}{$\partial W_+$}}
    \put(97,28.5){\footnotesize \textcolor{gray}{$\{s=C\}$}}
    \put(96,13.5){\footnotesize \textcolor{gray}{$\{s=-C\}$}}
     
     \put(45,2){\small $W_-$}
     \put(85,36){\small $\hat{W}_+$}

     \put(1.2,45.25){\small $z$}
    \put(-1.2,31){\small $s$}

    \put(20.6,29.25){\tiny $s=0$}
    \put(28,29.25){\tiny $s=-\ve$}
     
     \put(5,17.5){\small \textcolor{darkgreen}{$s=f(z)$}}

    \put(36.5,16.5){\small $\mathrm{id}$}
    \put(69,15){\small $\bar{\psi}$}
    \put(57,33){\small $\bar{\psi}$}
     
	\end{overpic}
	\caption{Constructing the asymmetric double of two Weinstein domains.}
	\label{fig:asym}
\end{figure}

Next we define the folded symplectic structure. Consider the $1$-form $e^s\, \alpha_-$ on $[-\ve, 1]_s \times \partial W_-$. Let $\pi: M_0 \to [-\ve, 1]_s \times \partial W_-$ be the projection and define $\lambda_0$ on $\Sigma_0$ by $\lambda_0 := \pi^*(e^s\, \alpha_-)$. By construction, we may extend $\lambda_0$ to a $1$-form on all of $\Sigma$ via $\lambda_+$ on $W^{\flat}_+$ and $\lambda_-$ on $W_-$, both of which agree with $\lambda_0$ on their respective gluing regions. By the orientation remark above, it follows that $d\lambda$ is a folded symplectic form on $\Sigma$ with fold $\Gamma = \{z=0\} \cap \Sigma_0$. Moreover, $\lambda$ is a positive contact-type Liouville form inducing on the fold the same contact structure as the boundary of both initial Weinstein domains. Indeed, the induced $1$-form on the fold is $e^{1}\, \alpha_-$.

Finally, we upgrade this exact folded symplectic structure to a folded Weinstein structure in the natural way. By shifting the  Morse functions $\phi_+^{\flat}:W_+^{\flat} \to \R$ and $\phi_-:W_- \to \R$ by constants, we may assume that $\sup \phi_+^{\flat} = -1$ and $\inf -\phi_- = 1$.  Define $\phi: \Sigma \to \R$ by $\phi:= \phi_+^{\flat}$ on $W_+^{\flat}$, $\phi := -\phi_-$ on $W_-$, and by $\phi := -z$ on $\Sigma_0 \cap \{s\geq 0\}$. Then by construction, $(\Sigma, \lambda, \phi)$ is a folded Weinstein manifold. 

\begin{definition}\label{def:asym_double}
Let $(W_+, \lambda_+, \phi_+)$ and $(W_-, \lambda_-, \phi_-)$ be two compact Weinstein domains of the same dimension, and let $\psi: (\partial W_-, \xi_-) \to (\partial W_+, \xi_+)$ be a contactomorphism of the boundaries. Define the \textbf{asymmetric double of $W_+$ and $W_-$ along $\psi$}, denoted $D(W_+, W_-, \psi)$, to be the folded Weinstein manifold $(\Sigma, \lambda, \phi)$ obtained as the result of the above construction.
\end{definition}

\subsection{Folded Lefschetz fibrations}

The following definition is the folded analogue of \cref{def:AWLF}.

\begin{definition}\label{def:AFWLF}
An \textbf{(abstract) folded Weinstein Lefschetz fibration} is the data 
\[
\left((W_0^{2n-2}, \lambda_0, \phi_0);\,\, \mathcal{L}^+ = (L_1^+, \dots, L_{k_+}^+),\, \mathcal{L}^- = (L_1^-, \dots, L_{k_-}^-) \right),
\]
abbreviated $(W_0; \mathcal{L}^+, \mathcal{L}^-)$, where $(W_0^{2n-2}, \lambda_0, \phi_0)$ is a Weinstein domain and both $(W_0; \mathcal{L}^+)$ and $(W_0; \mathcal{L}^-)$ are abstract Weinstein Lefschetz fibrations satisfying 
\[
\tau_{L_{k_+}^+} \circ \cdots \circ \tau_{L_{1}^+} = \tau_{L_{k_-}^-} \circ \cdots \circ \tau_{L_{1}^-}  
\]
where $\tau_{L_j^{\pm}}: W_0 \to W_0$ is the positive symplectic Dehn twist around $L_{j}^{\pm}$ and equality is in the sense of symplectic isotopy. The \textbf{total space of $(W_0; \mathcal{L}^+, \mathcal{L}^-)$}, denoted $|W_0; \mathcal{L}^+, \mathcal{L}^-|$, is the folded Weinstein manifold $\Sigma$ obtained by taking the asymmetric double of $|W_0; \mathcal{L}^+|$ and $|W_0; \mathcal{L}^-|$, i.e., in the notation of \cref{def:asym_double}, $\Sigma := D(|W_0; \mathcal{L}^+|, |W_0; \mathcal{L}^-|, \mathrm{id})$.
\end{definition}

\begin{definition}\label{def:suppAFWLF}
Let $(\Sigma, \lambda, \phi)$ be a folded Weinstein manifold. We say that $(\Sigma, \lambda, \phi)$ is \textbf{supported by a folded Weinstein Lefschetz fibration $(W_0; \mathcal{L}^+, \mathcal{L}^-)$} if $|W_0; \mathcal{L}^+, \mathcal{L}^-|$ is folded Weinstein homotopic to $(\Sigma, \lambda, \phi)$.   
\end{definition}

\section{Proofs of main results}\label{sec:proofs}

There are three subsections. \cref{subsec:main0} contains the proof of \cref{theorem:main0}, i.e., the correspondence between contact germs and exact folded symplectic forms with a positive contact-type Liouville form. \cref{subsec:main} contains the proof of \cref{theorem:main}, i.e., the upgraded folded Weinstein existence result. We also include a discussion on how our proof compares with the method of Cannas da Silva in \cite{cannas2010foldedfour}. Finally, \cref{subsec:lefschetzproof} contains the proof of \cref{thm:lefschetz}, i.e., the existence of supporting folded Weinstein Lefschetz fibrations. 

\subsection{Proof of \cref{theorem:main0}}\label{subsec:main0} We prove the equivalence of (1) and (2) via two lemmas. The first lemma, \cref{lemma:FoldedtoCHT}, describes how an exact folded symplectic structure on $\Sigma$ determines the germ of a contact structure along $\Sigma$, assuming a positive contact-type Liouville form. This proves that (1) implies (2). The second lemma, \cref{lemma:CHTtoFolded}, gives the converse direction, namely, that a convex hypersurface naturally acquires an exact folded symplectic form with positive contact-type Liouville form. This completes the proof of \cref{theorem:main0}.

\begin{lemma}[Exact folded $\rightsquigarrow$ contact germ]\label{lemma:FoldedtoCHT}
Let $\Sigma$ be a closed and oriented manifold of dimension $2n\geq 2$, and let $\omega = d\lambda$ be an exact folded symplectic form with positive contact-type Liouville form $\lambda$ and contact fold $(\Gamma, \xi_{\Gamma})$. Then there exists a smooth function $f:\Sigma \to \R$ such that $\{f=0\} = \Gamma$ and such that $\alpha := f\, dt + \lambda$ is a vertically-invariant contact form on $\Sigma \times \R_t$. In particular, $\Sigma = \Sigma \times \{0\}$ is a convex hypersurface with contact dividing set $(\Gamma, \xi_{\Gamma})$.
\end{lemma}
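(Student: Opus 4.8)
The plan is to build the function $f$ by hand from the given data, using the positive contact-type condition to control the behavior near the fold $\Gamma$, and then to verify the contact condition region by region. First I would observe that away from $\Gamma$ the form $\omega = d\lambda$ is (anti)symplectic, so $\lambda$ is a genuine Liouville form on $R_+$ and $-R_-$; in particular $\lambda \wedge (d\lambda)^{n-1}$ is nonvanishing on a conformal symplectic hyperplane field there, and one expects that \emph{any} function $f$ with $f > 0$ on $R_+$, $f < 0$ on $R_-$, and the correct sign of $df$ transverse to $\Gamma$ will make $f\,dt + \lambda$ contact, provided we also rescale $f$ to be small enough in the interior. Concretely, the contact condition for $\alpha = f\,dt + \lambda$ on $\Sigma \times \R_t$ is $\alpha \wedge (d\alpha)^n > 0$, and expanding $d\alpha = df \wedge dt + \omega$ gives $\alpha \wedge (d\alpha)^n = dt \wedge \left( f\,\omega^n - n\, df \wedge \lambda \wedge \omega^{n-1}\right)$, so the task reduces to arranging that the $2n$-form $f\,\omega^n - n\, df \wedge \lambda \wedge \omega^{n-1}$ is a positive volume form on $\Sigma$ (this matches the criterion in \cref{lemma:contactnbd}).

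Next I would handle the two regimes. On the compact retracts $R_\pm^\ve$ of \cref{remark:retract_domain}, $\omega^n$ is a volume form (positive on $R_+$, negative on $R_-$), so for the term $f\,\omega^n$ to contribute with the correct sign I want $f > 0$ on $R_+^\ve$ and $f < 0$ on $R_-^\ve$; the term $n\,df \wedge \lambda \wedge \omega^{n-1}$ is lower order and can be absorbed by choosing $f$ with sufficiently small differential relative to its value, e.g. by taking $f = c g$ for a fixed function $g$ (with $g>0$ on $R_+$, $g<0$ on $R_-$, $\{g=0\}=\Gamma$, $dg \ne 0$ along $\Gamma$) and then either shrinking $g$ via a cutoff or, since $R_\pm^\ve$ are compact and $\omega^n \ne 0$ there, simply noting the inequality is open and scaling. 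The delicate regime is the collar $N(\Gamma) \cong \Gamma \times (-\ve,\ve)_\tau$ near the fold, where $\omega^n$ vanishes to first order: here the term $f\,\omega^n$ degenerates and the entire contact condition must be carried by $-n\, df \wedge \lambda \wedge \omega^{n-1}$. Using \cref{lemma:fold_normal} I can take $\omega = \omega_\Gamma - d(\tau^2 \alpha_\Gamma)$ on the collar, where $\alpha_\Gamma = \iota^*\lambda$ is a \emph{positive} contact form on $\Gamma$ by hypothesis; then $\omega^{n-1}$ restricted to $\tau = 0$ is $\omega_\Gamma^{n-1}$, and $\lambda \wedge \omega_\Gamma^{n-1}$ is, up to positive scaling, the orienting volume form $\alpha_\Gamma \wedge (d\alpha_\Gamma)^{n-1} \wedge (\text{transverse})$ coming from positivity of the contact form. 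Choosing $f$ so that near $\Gamma$ one has $df \wedge \lambda \wedge \omega^{n-1} = df \wedge \alpha_\Gamma \wedge \omega_\Gamma^{n-1} + O(\tau)$ with $df$ pointing in the $-\tau$ direction (so that $df \wedge (\text{orientation of } \Gamma)$ has the right sign — this is exactly the behavior of the Darboux model in \cref{ex:darboux}, where $X_\lambda \sim -\tfrac{1}{\tau}\partial_\tau$), I get $-n\,df\wedge\lambda\wedge\omega^{n-1} > 0$ on a neighborhood of $\Gamma$.

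I would then glue: pick $f$ on the collar $N(\Gamma)$ of the form $f = -\tau$ (or $-\tau$ times a positive function), extend it over $R_+^\ve$ and $R_-^\ve$ to be positive, resp. negative, and bounded away from $0$, interpolating monotonically in a collar so that $\{f = 0\} = \Gamma$ exactly and $df \ne 0$ along $\Gamma$. On $N(\Gamma)$ the contact condition holds by the normal-form computation above (shrinking $\ve$ so the $O(\tau)$ errors are dominated); on $R_\pm^\ve$ it holds because $f\,\omega^n$ has the right sign and dominates after rescaling $f$ (replace $f$ by $c f$ with $c$ large on the interior, or equivalently note $\omega^n \ne 0$ there and the condition is open under $C^1$-small perturbation of $df$); and in the overlap collars one checks both terms have the correct sign or tunes the interpolation. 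Finally, $f\,dt + \lambda$ is manifestly $t$-independent, so $\Sigma \times \{0\}$ is convex with contact vector field $\partial_t$, and the dividing set is $\{\alpha(\partial_t) = 0\} = \{f = 0\} = \Gamma$ with induced contact structure $\ker \iota^*\lambda = \xi_\Gamma$.

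The main obstacle I anticipate is the matching in the transition collar between $N(\Gamma)$ and the interiors $R_\pm^\ve$: near $\Gamma$ the contact condition is carried entirely by the $df \wedge \lambda \wedge \omega^{n-1}$ term (which forces $f$ to have nonzero, correctly-oriented derivative), while in the interior it is carried by $f\,\omega^n$ (which wants $f$ to be large and slowly varying); one must choose the profile of $f$ so that \emph{along the whole interpolation} the sum $f\,\omega^n - n\,df\wedge\lambda\wedge\omega^{n-1}$ stays positive. This is a soft one-variable argument — the two terms never have conflicting signs if $f$ is monotone in the collar and $df$ points the right way — but it requires care, and it is essentially the only place where the precise geometry (the positivity of the contact-type form, Martinet's normal form, and the Darboux model's Liouville vector field behavior) is genuinely used. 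Everything else is bookkeeping with open conditions and partitions of unity.
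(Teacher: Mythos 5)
Your overall strategy is the same as the paper's: expand $\alpha\wedge(d\alpha)^n = dt\wedge\bigl(f\,\omega^n - n\,df\wedge\lambda\wedge\omega^{n-1}\bigr)$, use Martinet's normal form (\cref{lemma:fold_normal}) with $\alpha_\Gamma=\iota^*\lambda$ a positive contact form to control the collar, and take $f$ locally constant (equivalently, with $|df|$ small relative to $|f|$) away from the fold so that $f\,\omega^n$ alone carries the contact condition there. The paper does exactly this, with the single explicit inequality $2\tau f(\tau)+f'(\tau)(1-\tau^2)>0$ governing the whole collar.

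However, your prescription in the collar has a sign error that, taken literally, breaks the construction. With the conventions you invoke (\cref{lemma:fold_normal} and \cref{remark:orientation}, so that $R_+\cap N(\Gamma)=\{\tau>0\}$ and $\lambda_\Gamma\wedge\omega_\Gamma^{n-1}\wedge d\tau>0$), one computes $\lambda\wedge\omega^{n-1}=(1-\tau^2)^n\,\lambda_\Gamma\wedge\omega_\Gamma^{n-1}$ and hence
\begin{equation*}
f\,\omega^n - n\,df\wedge\lambda\wedge\omega^{n-1} \;=\; n(1-\tau^2)^{n-1}\bigl[\,2\tau f(\tau)+f'(\tau)(1-\tau^2)\,\bigr]\,\lambda_\Gamma\wedge\omega_\Gamma^{n-1}\wedge d\tau ,
\end{equation*}
so positivity requires $f'(\tau)>0$ near $\Gamma$ (the minus sign in front of $n\,df\wedge\lambda\wedge\omega^{n-1}$ is cancelled by commuting $d\tau$ past the $(2n-1)$-form $\lambda_\Gamma\wedge\omega_\Gamma^{n-1}$). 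Your choice $f=-\tau$, ``$df$ pointing in the $-\tau$ direction,'' makes \emph{both} terms negative, and it also contradicts your own (correct) requirements that $f>0$ on $R_+$ and $\{f=0\}=\Gamma$: if $f<0$ just above $\Gamma$ but $f>0$ on $R_+^\ve$, then $f$ has an extra zero locus. The behavior $X_\lambda\sim-\tfrac{1}{\tau}\partial_\tau$ from \cref{ex:darboux} concerns the Liouville vector field, not the gradient of $f$, and should not be imported as the sign of $df$. The fix is simply $f$ increasing through $0$ along $\Gamma$ (e.g.\ $f=\tau$ near the fold, flattened to $f\equiv\pm1$ outside the collar, as in the paper); with that correction your anticipated ``main obstacle'' in the transition region disappears, since both bracketed terms are then nonnegative with positive sum. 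One smaller point: rescaling $f\mapsto cf$ does nothing, since the contact condition is jointly homogeneous in $(f,df)$; what matters, as you also say elsewhere, is making $df$ small relative to $f$, and taking $f$ locally constant in the interior is the clean way to do it.
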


\begin{proof}
As usual, let $\iota:\Gamma \hookrightarrow \Sigma$ denote the inclusion of the fold into $\Sigma$. Let $\lambda_{\Gamma} := \iota^*\lambda$ and likewise let $\omega_{\Gamma} := \iota^*\omega = d\lambda_{\Gamma}$. For any function $f:\Sigma \to \R$ and $\alpha = f\, dt + \lambda$, we have 
\begin{equation}
    \alpha \wedge (d\alpha)^n = dt\wedge \left(f\, \omega^n - n\, df \wedge \lambda \wedge \omega^{n-1}\right).
\end{equation}
Thus, we need to show that $\Omega_f := f\, \omega^n - n\, df \wedge \lambda \wedge \omega^{n-1}$ is an orienting volume form on $\Sigma$ for some choice of $f$. 

We first localize near the fold. Since $\lambda$ is a contact-type Liouville form, the null-foliation $\ker \omega_{\Gamma}$ is the Reeb direction of $\lambda_{\Gamma}$. Therefore, by \cref{lemma:fold_normal}, we may identify a neighborhood $N(\Gamma) \cong \Gamma \times (-\ve, \ve)_{\tau}$ of the fold in $\Sigma$ such that\footnote{We remind the reader of \cref{remark:orientation}. That is, $\Sigma$ is oriented so that a volume form on $N(\Gamma)$ is $\Omega_{\Gamma} \wedge d\tau$ for an orienting volume form $\Omega_{\Gamma}$ on $\Gamma$. In particular, $R_{+} \cap N(\Gamma) = \{\tau > 0\}$.} 
\begin{align*}
    \omega &= \omega_{\Gamma} - d(\tau^2\, \lambda_{\Gamma}) \\
    &= d\left[(1 - \tau^2)\, \lambda_{\Gamma}\right] \\
    &= -2\tau \, d\tau \wedge \lambda_{\Gamma} + (1-\tau^2)\, \omega_{\Gamma}.
\end{align*}
In particular, on $N(\Gamma)$ we have $\lambda = (1 - \tau^2)\, \lambda_{\Gamma}.$ Without loss of generality we may assume $\ve < 1$. 

Now we compute, with the end goal of $\Omega_f$ in mind. We have
\begin{align*}
    \omega^{n-1} &= 2(n-1)\tau(1-\tau^2)^{n-2}\, \lambda_{\Gamma}\wedge \omega_{\Gamma}^{n-2}\wedge d\tau\, +\, (1-\tau^2)^{n-1}\, \omega_{\Gamma}^{n-1} \\
    \omega^{n} &= 2n\tau(1-\tau^2)^{n-1}\,\lambda_{\Gamma}\wedge \omega_{\Gamma}^{n-1}\wedge d\tau \\
    \lambda\wedge \omega^{n-1} &= (1-\tau^2)^n\, \lambda_{\Gamma}\wedge \omega_{\Gamma}^{n-1}
\end{align*}
and hence, assuming $f=f(\tau)$ on $N(\Gamma)$,  
\begin{align*}
    \Omega_f &= f\, \omega^n - n\, df \wedge \lambda \wedge \omega^{n-1} \\
    &= n(1-\tau^2)^{n-1}\, \left[2\tau f(\tau) + f'(\tau)(1-\tau^2)\right]\, \lambda_{\Gamma}\wedge \omega_{\Gamma}^{n-1} \wedge d\tau.
\end{align*}
Since $\lambda$ is a positive contact-type Liouville form, $\lambda_{\Gamma}\wedge \omega_{\Gamma}^{n-1} \wedge d\tau > 0$ on $N(\Gamma)$. Hence it suffices to choose a function $f(\tau)$ such that $2\tau f(\tau) + f'(\tau)(1-\tau^2)>0$.

Let $f:[-\ve,\ve]_{\tau} \to \R$ be a smooth function satisfying $f(\pm \ve) = \pm 1$, $f'(\pm \ve) = 0$, $f'(\tau) >0$ for $-\ve < \tau < \ve$, and $f(0) = 0$; see \cref{fig:function}. Then $2\tau f(\tau) > 0$ for $\tau \neq 0$ and $f'(\tau)(1-\tau^2) > 0$ for $-\ve <\tau < \ve$, hence $2\tau f(\tau) + f'(\tau)(1-\tau^2)>0$ for all $-\ve \leq \tau \leq \ve$. 

\begin{figure}[ht]
	\begin{overpic}[scale=0.3]{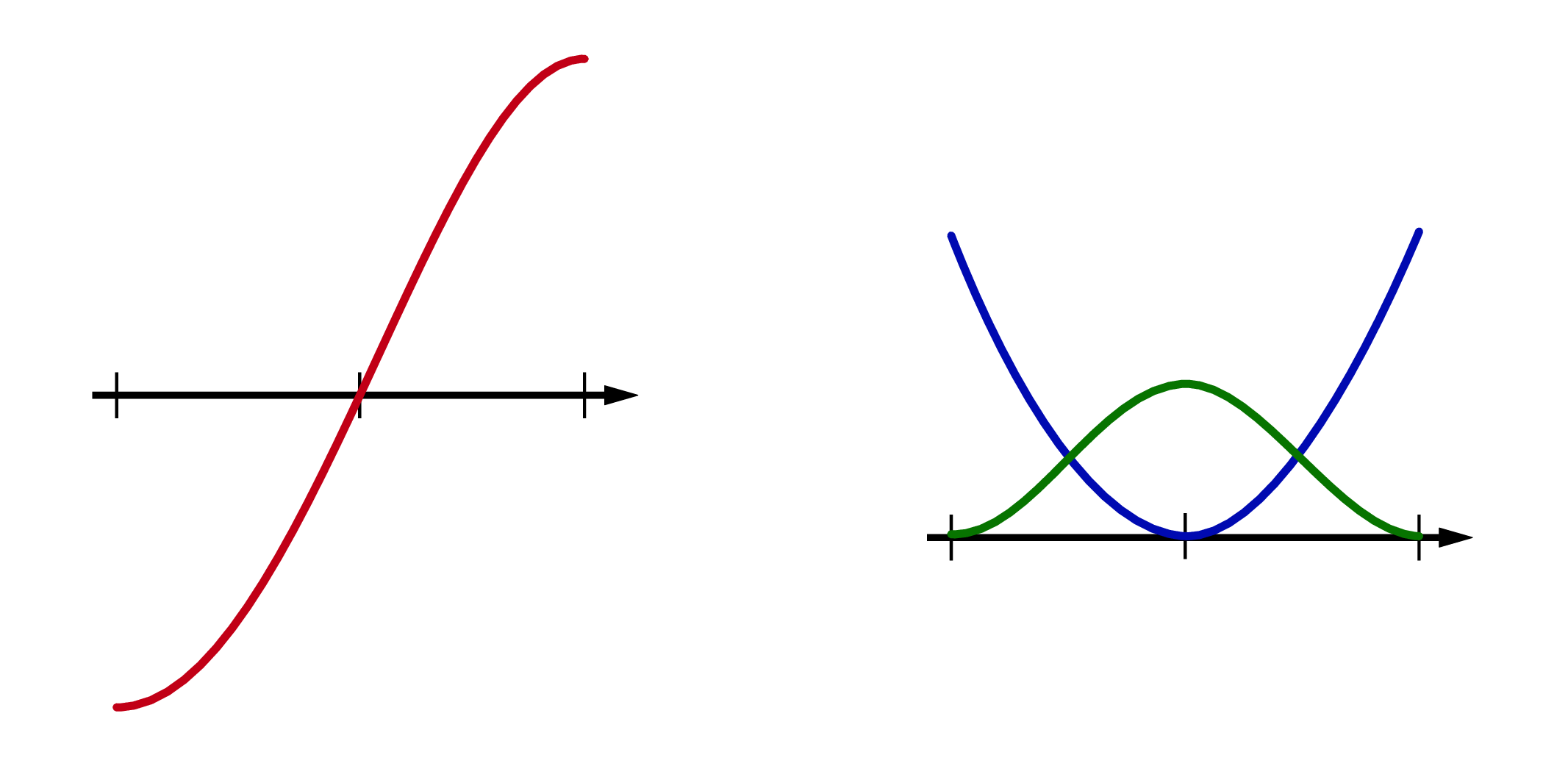}
     \put(41.25,23){\small $\tau$}
     \put(94.5,14){\small $\tau$}
     \put(86.5,21){\small \textcolor{darkblue}{$2\tau f(\tau)$}}
     \put(29,32){\small \textcolor{darkred}{$f(\tau)$}}
     \put(67.5,27){\small \textcolor{darkgreen}{$f'(\tau)(1-\tau^2)$}}

    \put(36.5,20){\small $\ve$}
    \put(5,20){\small $-\ve$}

    \put(89.5,11){\small $\ve$}
    \put(58,11){\small $-\ve$}
     
	\end{overpic}
    \vskip-0.43cm
	\caption{The function $f(\tau)$ used in the proof of \cref{lemma:FoldedtoCHT}.}
	\label{fig:function}
\end{figure}

With $f$ defined appropriately on (the closure of) $N(\Gamma)$, it remains to extend $f$ across $\Sigma \setminus N(\Gamma)$. Thanks to the condition $f'(\pm \ve) = 0$, we can do so smoothly by defining $f \equiv \pm 1$ on $R_{\pm} \setminus N(\Gamma)$. Then on $\Sigma \setminus N(\Gamma)$ we have $df=0$ and hence on this region
\[
\Omega_f = \mathrm{sgn}(f)\, \omega^n.
\]
Since $\omega$ is folded symplectic, $\Omega_f > 0$. This completes the proof. 
\end{proof}

\begin{lemma}[Contact germ $\rightsquigarrow$ exact folded]\label{lemma:CHTtoFolded}
Let $\Sigma$ be a closed and orientable manifold of dimension $2n\geq 2$, and let $\alpha = f\, dt + \beta$ be a contact form on $\Sigma \times \R_t$ where $f: \Sigma \to \R$ and $\beta \in \Omega^1(\Sigma)$ are $t$-independent. Let $(\Gamma =\{f=0\}, \xi_{\Gamma})$ be the contact dividing set of $\Sigma = \Sigma \times \{0\}$. Then there is a smooth function $g:\Sigma \to \R_{>0}$ such that $\omega := d(g\, \beta)$ is an exact folded symplectic form with positive contact-type Liouville form and contact fold $(\Gamma, \xi_{\Gamma})$.  
\end{lemma}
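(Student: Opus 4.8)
The plan is to reverse-engineer the computation from the proof of \cref{lemma:FoldedtoCHT}. There, a positive contact-type Liouville form $\lambda$ produced a convex germ $f\,dt+\lambda$; here we are handed $f\,dt+\beta$ and must recover a positive contact-type Liouville form by rescaling $\beta$ by a positive function $g$. The contact condition of \cref{lemma:contactnbd} says precisely that $\Omega := f\,(d\beta)^n - n\,df\wedge\beta\wedge(d\beta)^{n-1}$ is an orienting volume form on $\Sigma$. We want $g>0$ so that $\omega := d(g\beta)$ is folded symplectic with fold exactly $\Gamma=\{f=0\}$ and with $g\beta$ restricting to a positive contact form on $\Gamma$.

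First I would localize near $\Gamma$ using \cref{lemma:dividingsetnormal}: after a contact isotopy (which does not change the germ up to the equivalences we care about), on $N(\Gamma)\cong\Gamma\times(-\ve,\ve)_\tau$ we may take $f=\tau$ and $\beta=\alpha_\Gamma$, where $\alpha_\Gamma=\iota^*\alpha$ is the induced contact form on $(\Gamma,\xi_\Gamma)$. Then, seeking $g=g(\tau)$ on $N(\Gamma)$, I compute $d(g\alpha_\Gamma)=g'\,d\tau\wedge\alpha_\Gamma+g\,d\alpha_\Gamma$, so
\[
\bigl(d(g\alpha_\Gamma)\bigr)^n = n\,g^{n-1}g'\,d\tau\wedge\alpha_\Gamma\wedge(d\alpha_\Gamma)^{n-1}.
\]
This vanishes exactly where $g'=0$, so to get the fold to be precisely $\Gamma=\{\tau=0\}$ and to have the zero be transverse, I choose $g(\tau)$ with $g>0$ everywhere, $g'(0)=0$, $g''(0)\neq 0$ (say $g'(\tau)$ changing sign from negative to positive at $\tau=0$, so that $\{g(\tau)\alpha_\Gamma\}^n$ has a positive region for $\tau>0$), and—to match orientation conventions with \cref{remark:orientation}—arrange signs so $R_+\cap N(\Gamma)=\{\tau>0\}$. (A model is $g(\tau)=1-c\tau^2$ rescaled so that $g$ stays positive on $[-\ve,\ve]$; note this is exactly the shape $(1-\tau^2)$ appearing in the normal form of \cref{lemma:FoldedtoCHT}.) One also checks $\iota^*(g\alpha_\Gamma)=g(0)\,\alpha_\Gamma$ is a positive contact form, giving the contact-fold condition and condition (2) of \cref{def:folded_form} (maximal rank along $\Gamma$, with null-foliation the Reeb direction of $\alpha_\Gamma$).

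It remains to extend $g$ from $N(\Gamma)$ to all of $\Sigma$ keeping $g>0$ and keeping $\omega=d(g\beta)$ nondegenerate away from $\Gamma$. On $\Sigma\setminus N(\Gamma)$ we have $f\neq 0$; the contact condition $\Omega>0$ there can be rewritten, for $g=e^h$, as
\[
\bigl(d(g\beta)\bigr)^n = g^{n-1}\bigl(g\,(d\beta)^n + n\,dg\wedge\beta\wedge(d\beta)^{n-1}\bigr)
= g^{n}\bigl((d\beta)^n + n\,dh\wedge\beta\wedge(d\beta)^{n-1}\bigr),
\]
so I want $(d\beta)^n + n\,dh\wedge\beta\wedge(d\beta)^{n-1}$ to be a positive multiple of $\Omega$ on each of $R_\pm\setminus N(\Gamma)$ — i.e., $dh$ should play the role that $-d(\log|f|)\cdot(\text{stuff})$ plays in making $\Omega$ itself positive. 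The cleanest route: on $R_+$ set $g:=\tfrac{1}{f}\cdot(\text{positive cutoff interpolating from }g_{N(\Gamma)}\text{ to }1)$, mirrored on $R_-$ with $g:=\tfrac{-1}{f}\cdot(\cdots)$; then where $g=\pm 1/f$ exactly, $g\beta=\beta/|f|$ is (a rescaling of) the ideal Liouville form $\lambda_\pm$ of \cref{prop:CHT_ideal}, whose differential is symplectic on $R_\pm$ with the correct orientation. The main obstacle is organizing this interpolation so that positivity of $(d(g\beta))^n$ (equivalently, the sign condition on the bracketed $(d\beta)^n+n\,dg/g\wedge\beta\wedge(d\beta)^{n-1}$) is maintained throughout the transition region between "$g=$ the local model near $\Gamma$" and "$g=1/|f|$ on the ideal-Liouville part" — this is a convexity/interpolation argument of the same flavor as the choice of $f(\tau)$ in \cref{fig:function}, and I expect it to be the only genuinely delicate point; everything else is bookkeeping with \cref{lemma:dividingsetnormal}, \cref{prop:CHT_ideal}, and \cref{remark:orientation}.
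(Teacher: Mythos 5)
Your proposal is correct and follows essentially the same route as the paper: localize with \cref{lemma:dividingsetnormal}, reduce the fold condition to the sign of $g'(\tau)$ near $\Gamma$ (with a nondegenerate maximum of $g$ at $\tau=0$, as in your model $1-c\tau^2$ --- though your prose has the sign of $g'$ backwards, since by \cref{remark:orientation} one needs $g'<0$ on $\{\tau>0\}$), and use the rescaling $\beta/|f|$ away from the fold; the paper merely packages your interpolation differently, first normalizing $\tilde\beta=\beta/\mu$ with $\mu$ interpolating between $|f|$ and a constant and then setting $g=e^{-f^2}$. The transition step you flag as delicate is in fact immediate, because the interpolation can be confined to $N(\Gamma)$ where $\beta=\beta_\Gamma$ and $(d\beta_\Gamma)^n=0$, so by your own computation the only requirement is $\mp g'(\tau)>0$ on $\{\pm\tau>0\}$, which any positive function with a single nondegenerate interior maximum matching $1/|\tau|$ near $\tau=\pm\ve$ satisfies.
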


\begin{proof}
The proof is similar to the proof of \cref{lemma:FoldedtoCHT}, appealing instead to the normal form near the dividing set of a convex hypersurface. 

\vspace{2mm}
\noindent \emph{Step 1: Normalize $f$.}
\vspace{2mm}

First, we rescale $\alpha = f\, dt + \beta$ by a positive function $\Sigma \to \R_{>0}$ to produce a different $t$-invariant contact form for the same contact structure. To describe the normalization we localize near the dividing set. By \cref{lemma:dividingsetnormal}, near $\Gamma$ there are coordinates on a neighborhood $N(\Gamma) \cong \Gamma \times (-\ve, \ve)_{\tau}$ of $\Gamma$ in $\Sigma$ such $f = \tau$ and $\beta = \beta_{\Gamma}$, where $\beta_{\Gamma} = \iota^*\beta$ is a contact form on $\Gamma$. Fix $0 < \delta < \ve$ and define a smaller collar neighborhood $N^{\delta}(\Gamma) := \Gamma \times (-\delta, \delta)_{\tau}$. Next we construct a smooth function $\mu: \Sigma \to \R_{>0}$ as follows: 
\begin{itemize}
    \item[$\diamond$] On $\Sigma \setminus N(\Gamma)$, set $\mu = |f|$. 
    \item[$\diamond$] On $N(\Gamma)$, let $\mu=\mu(\tau)$ be a smooth function that agrees with $|\tau|$ near $\tau = \pm \ve$, is a constant value $\mu \equiv \ve'$ for some $\delta \ll \ve' < \ve$ on $N^{\delta}(\Gamma)$, satisfies $0\leq \mu'(\tau) \leq 1$ for $\delta < \tau < \ve$, and satisfies $-1\leq \mu'(\tau) \leq 0$ for $-\ve < \tau < -\delta$; see \cref{fig:function2}.
\end{itemize}
Set $\tilde{\alpha} := \tilde{f}\, dt + \tilde{\beta}$ where $\tilde{f} := \frac{1}{\mu}f$ and $\tilde{\beta} := \frac{1}{\mu} \, \beta$. Then $\tilde{f}:\Sigma \to \R$ and $\tilde{\beta}$ satisfy the following properties: 
\begin{enumerate}
    \item On $R_{\pm} \setminus N(\Gamma)$, we have $d\tilde{f} = 0$ and $\pm (d\tilde{\beta})^n > 0$.
    \item On $N^{\delta}(\Gamma) = \Gamma \times (-\delta, \delta)_{\tau}$, we have $\tilde{f} = C\tau$ for some constant $C > 0$ and $\tilde{\beta} = \tilde{\beta}_{\Gamma} := \iota^*\tilde{\beta}$ where $\tilde{\beta}_{\Gamma}$ is a contact form on $\Gamma$. 
    \item On $N(\Gamma)$ we have $\tilde{f}'(\tau) \geq 0$. 
\end{enumerate}
Property (1) follows from \cref{lemma:contactnbd}. Indeed, since $d\tilde{f} = 0$, $\tilde{f}\, (d\tilde{\beta})^n$ is an orienting volume form on $R_{\pm} \setminus N(\Gamma)$. Property (2) is immediate by construction. To see (3), note that on $N(\Gamma)$ we have $\tilde{f}'(\tau) = \mu(\tau)^{-2}(\mu(\tau) - \tau\mu'(\tau))$. By the construction of $\mu$, we have $\mu(\tau) \geq |\tau|$ and $|\tau \mu'(\tau)| \leq |\tau|$, so 
\[
\mu(\tau) - \tau\mu'(\tau) \geq \mu(\tau) - |\tau|\geq 0
\]
which implies $\tilde{f}'(\tau) \geq 0$. We now rename $f=\tilde{f}$ and $\beta = \tilde{\beta}$. 

\begin{figure}[ht]
	\begin{overpic}[scale=0.4]{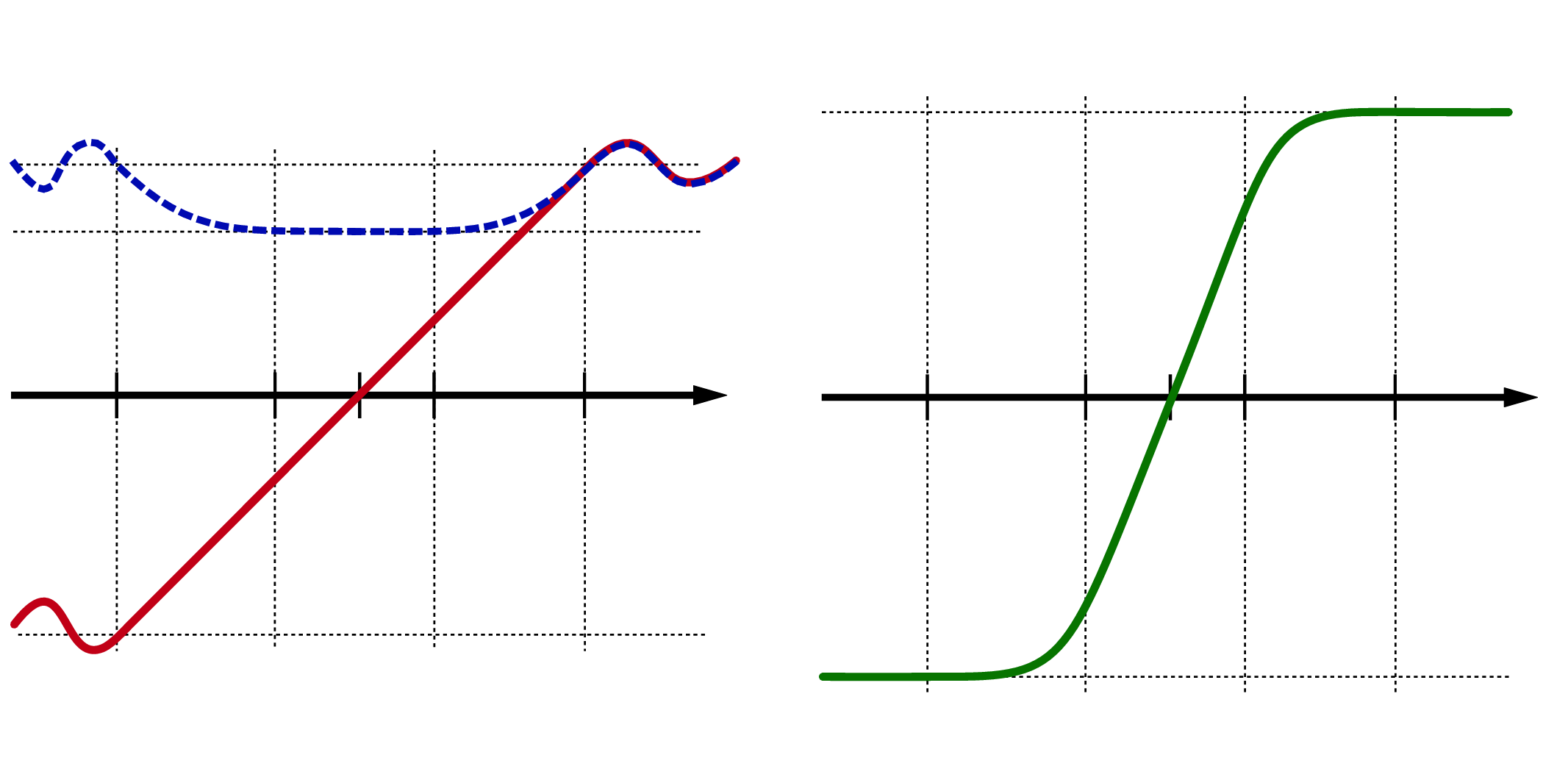}
     \put(46.75,23){\small $\tau$}
     \put(98.5,23){\small $\tau$}
     \put(8,40){\small \textcolor{darkblue}{$\mu(\tau)$}}
     \put(30,28){\small \textcolor{darkred}{$f(\tau)$}}
     \put(64,30){\small \textcolor{darkgreen}{$\tilde{f}(\tau) = \frac{f(\tau)}{\mu(\tau)}$}}

    \put(5.75,5){\small $-\ve$}
    \put(15.5,5){\small $-\delta$}
    \put(27,5){\small $\delta$}
    \put(36.75,5){\small $\ve$}

    \put(57.25,2.5){\small $-\ve$}
    \put(67,2.5){\small $-\delta$}
    \put(78.5,2.5){\small $\delta$}
    \put(88.25,2.5){\small $\ve$}

    \put(-1,33.75){\small $\ve'$}
    \put(-1,37.5){\small $\ve$}
    \put(50.5,41){\small $1$}
     
	\end{overpic}
    \vskip-0.2cm
	\caption{The normalization of $f$ constructed in the proof of \cref{lemma:CHTtoFolded}.}
	\label{fig:function2}
\end{figure}

\vspace{2mm}
\noindent \emph{Step 2: Define $g$.}
\vspace{2mm}

With our contact form normalized as in Step 1, we define $g := e^{-f^2}$ and $\omega := d(g\, \beta)$. We need to prove that $\omega$ is an exact folded symplectic form with fold $\Gamma$ and positive contact-type Liouville form. We have $\omega = dg\wedge \beta + g\, d\beta$ and so 
\begin{equation}
    \omega^n = ng^{n-1}\, dg\wedge \beta \wedge (d\beta)^{n-1} + g^n\, (d\beta)^n.
\end{equation}
Away from $N(\Gamma)$, $f = \pm 1$ on $R_{\pm}$ and so $dg = 0$. Thus, in this region $\omega^n =e^{-n} (d\beta)^n$. By property (1), $\pm \omega^n > 0$ on $R_{\pm} \setminus N(\Gamma)$. On $N(\Gamma)$, we have $dg = -2f(\tau)e^{-f(\tau)^2}\, d\tau$ and so 
\begin{align*}
\omega^n &= -2nf(\tau)e^{-nf(\tau)^2}\, d\tau \wedge \beta_{\Gamma} \wedge (d\beta_{\Gamma})^{n-1} \, + \, e^{-nf(\tau)^2}\, (d\beta_{\Gamma})^n \\
&= 2n f(\tau)e^{-nf(\tau)^2}\, \beta_{\Gamma} \wedge (d\beta_{\Gamma})^{n-1} \wedge d\tau
\end{align*}
where the second equality follows from the fact that $(d\beta_{\Gamma})^n= 0$ on $\Gamma^{2n-1}$ by a dimension count. Since $\beta_{\Gamma}$ is a contact form, $\beta_{\Gamma} \wedge (d\beta_{\Gamma})^{n-1}\wedge d\tau $ is an orienting volume form on $\Gamma \times (-\ve, \ve)_{\tau}$. By our normalization of $f$, it follows that $\omega^n \pitchfork 0$ exactly along $\Gamma$, as desired. 

The fact that $g\, \beta$ is a positive contact-type Liouville form is immediate, since $\iota(g\, \beta) = \beta_{\Gamma}$, the latter of which is a positive contact form by assumption. This completes the proof. 
\end{proof}

\subsection{Proof of \cref{theorem:main}}\label{subsec:main}

Recall that an \textit{almost contact structure} on an odd-dimensional manifold $M^{2n+1}$ is a hyperplane distribution $H\subset TM$ together with a complex bundle structure $J:H \to H$. Equivalently, we may view an almost contact structure as a pair $(\alpha, \omega)$ where $\alpha$ is a non-vanishing $1$-form on $M$ and $\omega$ is a non-degenerate $2$-form on $H:=\ker \alpha$. The following fact is standard (see, for example, \cite{borman2014overtiwsted}), but we include a proof for completeness.

\begin{lemma}\label{lemma:SACStoContact}
Let $\Sigma$ be a closed and oriented manifold of dimension $2n\geq 2$ with a stable almost complex structure. Then $\Sigma \times \R$ has an almost contact structure.     
\end{lemma}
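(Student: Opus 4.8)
The plan is to unwind the two equivalent formulations of ``stable almost complex structure'' and ``almost contact structure'' and match them directly. By definition, a stable almost complex structure on $\Sigma$ is a complex vector bundle structure $J$ on $T\Sigma \oplus \underline{\ve}^2$, where $\underline{\ve}^2 \to \Sigma$ is the trivial rank-$2$ bundle. On the other hand, an almost contact structure on the $(2n+1)$-dimensional manifold $\Sigma \times \R$ is (equivalently to the hyperplane-plus-complex-structure description) a reduction of the structure group of $T(\Sigma \times \R)$ to $U(n) \times 1$, i.e., a splitting $T(\Sigma \times \R) \cong \mathcal{H} \oplus \underline{\ve}^1$ with a complex bundle structure on the rank-$2n$ summand $\mathcal{H}$.

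First I would observe that $T(\Sigma \times \R) \cong \pi^* T\Sigma \oplus \underline{\ve}^1$, where $\pi : \Sigma \times \R \to \Sigma$ is the projection and $\underline{\ve}^1$ is spanned by $\partial_t$; abusing notation, I will just write $T\Sigma \oplus \underline{\ve}^1$, pulled back to $\Sigma \times \R$. The subtlety is that a stable almost complex structure gives a complex structure on $T\Sigma \oplus \underline{\ve}^2$ (two extra trivial real dimensions), whereas for the almost contact structure on $\Sigma \times \R$ I only want to peel off \emph{one} trivial real summand and have a complex structure on what remains. So the next step is to pass from $T\Sigma \oplus \underline{\ve}^2$ to $T\Sigma \oplus \underline{\ve}^1$: form the bundle $E := (T\Sigma \oplus \underline{\ve}^1) \oplus \underline{\ve}^1 = T\Sigma \oplus \underline{\ve}^2$ over $\Sigma \times \R$, which carries the (pulled-back) complex structure $J$, and note that $E$ has odd complex``half-rank'' issues — actually $E$ has real rank $2n+2$, hence complex rank $n+1$. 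I want instead an almost contact structure on $\Sigma \times \R$, which is a complex rank-$n$ bundle $\mathcal{H}$ with $\mathcal{H} \oplus \underline{\ve}^1 \cong T(\Sigma\times\R)$, equivalently $\mathcal{H} \oplus \underline{\ve}^2 \cong T\Sigma \oplus \underline{\ve}^3 \cong (T\Sigma \oplus \underline{\ve}^1)\oplus \underline{\ve}^2$.

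Here is where the extra $\R$ factor does real work. The honest argument: a stable almost complex structure on $\Sigma$ means $T\Sigma \oplus \underline{\ve}^2$ admits a complex structure. I claim $T\Sigma \oplus \underline{\ve}^1$ then admits a complex structure \emph{after} pulling back to $\Sigma \times \R$ — no wait, that is false in general by dimension parity when $\dim \Sigma$ is even, since $T\Sigma \oplus \underline{\ve}^1$ has odd rank $2n+1$. The correct statement is that $\Sigma \times \R$ (dimension $2n+1$, odd) has an almost contact structure, i.e., $T(\Sigma\times\R)\oplus\underline{\ve}^1$ has a complex structure; and $T(\Sigma\times\R) \oplus \underline{\ve}^1 \cong T\Sigma \oplus \underline{\ve}^2$ pulled back, which carries $J$. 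That is the whole point: an almost contact structure on an odd-dimensional manifold $N^{2m+1}$ is precisely a complex vector bundle structure on $TN \oplus \underline{\ve}^1$, and for $N = \Sigma \times \R$ we have $TN \oplus \underline{\ve}^1 \cong (T\Sigma \oplus \underline{\ve}^1)\oplus \underline{\ve}^1 = T\Sigma \oplus \underline{\ve}^2$, on which the stable almost complex structure furnishes $J$ directly. So the proof is essentially the identification $T(\Sigma\times\R)\oplus\underline{\ve}^1\cong \pi^*(T\Sigma\oplus\underline{\ve}^2)$ together with the equivalence between the ``$(\alpha,\omega)$'' description of almost contact structures and the ``complex structure on $TN\oplus\underline{\ve}^1$'' description.

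I would therefore structure the proof as: (i) recall that an almost contact structure on $N^{2m+1}$ is equivalent to a complex bundle structure on $TN \oplus \underline{\ve}^1$ (given such a $J$, take $\mathcal{H}$ to be a complement of a nonvanishing section of the underlying real bundle, choose a compatible metric, and produce the pair $(\alpha,\omega)$ by duality and restriction; conversely from $(\alpha,\omega)$ build $J$ on $\ker\alpha \oplus \langle\partial_t\text{-like direction}\rangle$ — here one uses that $\ker\alpha$ has even rank $2m$ and the extra trivial line pairs with the Reeb-like direction). (ii) Compute $T(\Sigma\times\R)\oplus\underline{\ve}^1 \cong \pi^*T\Sigma \oplus \underline{\ve}^2 = \pi^*(T\Sigma\oplus\underline{\ve}^2)$. (iii) Pull back the stable almost complex structure $J$ on $T\Sigma\oplus\underline{\ve}^2$ along $\pi$ and transport it across the isomorphism in (ii) to get a complex structure on $T(\Sigma\times\R)\oplus\underline{\ve}^1$, i.e., an almost contact structure on $\Sigma\times\R$ by (i). The main obstacle, such as it is, is purely bookkeeping in step (i): being careful about which trivial summand is ``absorbed'' and checking that the equivalence between the two descriptions of almost contact structures is set up so that the stabilizing $\underline{\ve}^2$ on $\Sigma$ is exactly accounted for by the new $\R$-direction plus the contact-normal direction. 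There is no hard geometry here — it is a linear-algebra/obstruction-theory identity — but it is the step most prone to an off-by-one rank error, so I would write it out carefully.
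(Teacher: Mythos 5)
Your proposal is correct and takes essentially the same route as the paper: both identify $T(\Sigma\times\R)\oplus\underline{\ve}^1$ with $\pi^*(T\Sigma\oplus\underline{\ve}^2)$ and transport the pulled-back complex structure $J$ across this isomorphism. The only difference is cosmetic: where you invoke the general equivalence between almost contact structures on $N^{2m+1}$ and complex bundle structures on $TN\oplus\underline{\ve}^1$, the paper realizes that step concretely by taking $H$ to be the hyperplane of complex tangencies $J\left[T(\Sigma\times\R)\oplus 0\right]\cap\left[T(\Sigma\times\R)\oplus 0\right]$ with the restricted $J$.
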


\begin{proof}
Let $J_0$ be a complex (bundle) structure on $T\Sigma \oplus \underline{\ve}^2 \to \Sigma$, where $\underline{\ve}^2 \to \Sigma$ is the trivial rank-$2$ bundle. Let $\pi: \Sigma \times \R \to \Sigma$ be the projection. Then $J:=\pi^*J_0$ is a complex structure on the pullback bundle $\pi^*(T\Sigma \oplus \underline{\ve}^2) \to \Sigma \times \R$. Note that 
\[
\pi^*(T\Sigma \oplus \underline{\ve}^2) \cong T(\Sigma \times \R) \oplus \underline{\ve}^1
\]
where, on the right, $\underline{\ve}^1 \to \Sigma \times \R$ is the trivial rank-$1$ bundle. Let $H$ be the hyperplane distribution of complex tangencies in $T(\Sigma \times \R)$ induced by $J$, i.e., 
\[
H:= J[T(\Sigma \times \R) \oplus 0] \cap [T(\Sigma \times \R) \oplus 0]
\]
viewed as a subbundle of $T(\Sigma \times \R)$. Then $(H, J\mid_{H})$ is an almost contact structure on $\Sigma \times \R$. 
\end{proof}

\begin{proof}[Proof of \cref{theorem:main}]
Let $\Sigma$ be a closed and oriented manifold of dimension $2n\geq 2$ with a stable almost complex structure. By \cref{lemma:SACStoContact}, there is an almost contact structure on $\Sigma \times \R$. By Gromov's $h$-principle for the existence of contact structures on open manifolds \cite{gromov1969stable}, the almost contact structure may be homotoped to a genuine contact structure $\xi$ on $\Sigma \times \R$. 

By \cref{theorem:hh19}, there is an embedding $\iota:\Sigma \hookrightarrow \Sigma \times \R$ which is $C^0$-close to the $0$-inclusion $\iota_0(\Sigma)=\Sigma \times \{0\}$ such that $\iota(\Sigma)$ is a Weinstein convex hypersurface in $(\Sigma \times \R, \xi)$. This means that there is a smooth Morse function $\phi: \iota(\Sigma) \to \R$ with dividing set $\phi^{-1}(0)$ for which the characteristic foliation of $\iota(\Sigma)$ is gradient-like. 

By \cref{lemma:CHTtoFolded}, there is an exact folded symplectic structure $d\lambda$ on $\iota(\Sigma)$ with positive contact-type Liouville form whose fold coincides with the dividing set of $\iota(\Sigma)$. Moreover, the Liouville vector field $X_{\lambda}$ of the folded form directs the characteristic foliation of $\iota(\Sigma)$ on $R_+$, and directs the oppositely oriented characteristic foliation on $R_-$. This means that $X_{\lambda}$ is gradient-like for $\pm \phi\mid_{R_{\pm}}$ and hence $(\lambda, \phi)$ is a folded Weinstein structure on $\iota(\Sigma)$. This pulls back to a folded Weinstein structure on $\Sigma$, as desired. 
\end{proof}

We close this subsection with a discussion on how our proof of \cref{theorem:main} compares and contrasts with the technique of \cite{cannas2010foldedfour} in proving existence of folded symplectic forms. The situation is summarized by \cref{fig:proofcomp}. 

\begin{figure}[ht]
	\begin{overpic}[scale=0.4]{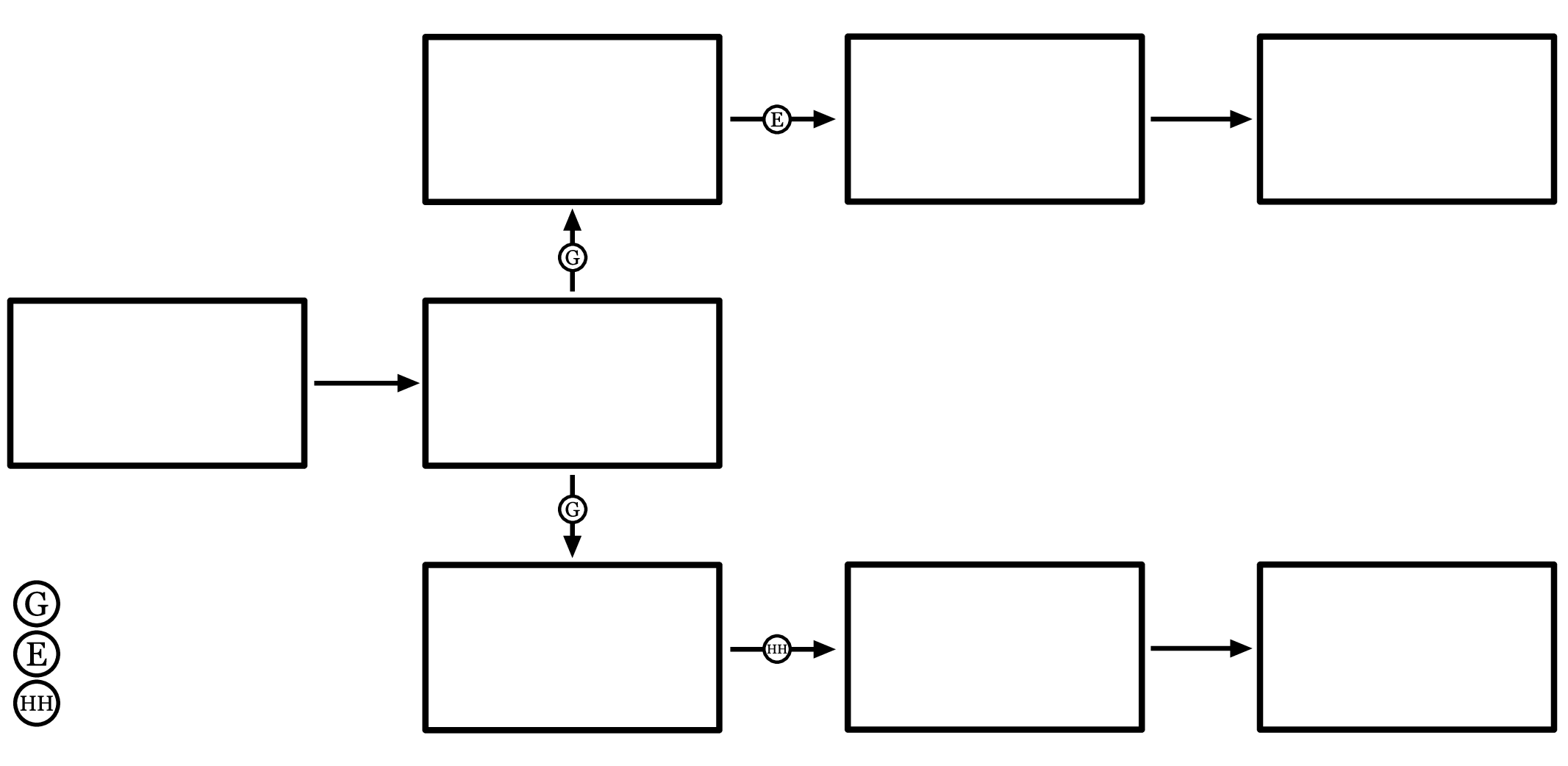}
     \put(4.5, 10){\tiny $=$ Gromov \cite{gromov1969stable}}
     \put(4.5, 6.75){\tiny $=$ Eliashberg \cite{eliashberg1970fold}}
     \put(4.5, 3.5){\tiny $=$ Honda-Huang \cite{honda2019convex}}

     \put(2,25){\footnotesize $\Sigma$ stable almost}
     \put(1.25,22.5){\footnotesize complex structure}

     \put(30,25){\footnotesize $\Sigma\times \R$ almost}
     \put(28,22.5){\footnotesize contact structure}

     \put(30,8.5){\footnotesize $\Sigma\times \R$ contact}
     \put(31,6){\footnotesize structure $\xi$}

     \put(56,8.5){\footnotesize $\Sigma \hookrightarrow (\Sigma\times \R, \xi)$}
     \put(55,6){\footnotesize Weinstein convex}

     \put(85.5,8.5){\footnotesize $\Sigma$ folded}
     \put(81,6){\scriptsize Weinstein structure}
     \put(85.5,42){\footnotesize $\Sigma$ folded}
     \put(82,39.5){\footnotesize symplectic form}

     \put(27.85,43.25){\tiny $\Sigma\times \R$ almost contact}
     \put(35.75,42.25){\small \rotatebox{-90}{$\subseteq$}}
     \put(29,38.5){\tiny \tiny $\Sigma\times \R^2$ symplectic}

     \put(56,44.25){\tiny folded immersion}
     \put(57.25,41.75){\tiny $\Sigma \to (\Sigma \times \R)/L$}
     \put(56.5,39.5){\tiny into symplectic}
     \put(59,37.25){\tiny leaf space}
     
	\end{overpic}
	\caption{The top branch describes the folded symplectic existence argument of \cite{cannas2010foldedfour}, and the lower branch describes the proof of \cref{theorem:main}.}
	\label{fig:proofcomp}
\end{figure}

Briefly and informally, Cannas da Silva's existence proof proceeds as follows: 
\begin{enumerate}
    \item Assume that $\Sigma$ is equipped with a stable almost complex structure. From this one can induce an almost complex structure $J$ on $\Sigma \times \R^2$. 
    \item By Gromov's $h$-principle for existence of symplectic structures on open manifolds \cite{gromov1969stable}, after homotoping $J$ there is an honest symplectic structure $\omega$ compatible with $J$. Moreover, as in \cref{lemma:SACStoContact}, $J$ induces a co-orientable almost contact structure on the codimension-$1$ submanifold $\Sigma \times \R$. 

    \item Let $\iota:\Sigma \times \R \hookrightarrow \Sigma \times \R^2$ be the inclusion. Since $\Sigma \times \R^2$ is symplectic, the leaf space $(\Sigma \times \R)/L$ of the rank-$1$ foliation $L$ determined by $\ker \iota^*\omega$ is locally symplectic. One then applies Eliashberg's $h$-principle for folded immersions relative to a foliation \cite{eliashberg1970fold} to $\Sigma \to \Sigma \times \R$ relative to $L$ to pull back the locally symplectic structure on the leaf space to a folded symplectic form on $\Sigma$.  
\end{enumerate}

\subsection{Proof of \cref{thm:lefschetz}.}\label{subsec:lefschetzproof}

Finally we establish existence of supporting folded Weinstein Lefschetz fibrations on all folded Weinstein manifolds. 

\begin{proof}[Proof of \cref{thm:lefschetz}.]
Let $(\Sigma, \lambda, \phi)$ be a folded Weinstein manifold with fold $(\Gamma, \xi_{\Gamma})$ and positive (resp. negative) region $R_{\pm}$. Let $(\pm R_{\pm}^{\ve}, \lambda, \pm\phi)$ be the Weinstein domain given by a slight compact retract of $R_{\pm}$ as in \cref{remark:retract_domain}.     

By \cref{thm:GP17} of Giroux and Pardon \cite{giroux2017lefschetz}, there is a Weinstein Lefschetz fibration $(W_{\pm}; \tilde{\mathcal{L}}^{\pm})$ whose total space $|W_{\pm}; \tilde{\mathcal{L}}^{\pm}|$ is Weinstein homotopic to $(\pm R_{\pm}^{\ve}, \lambda, \pm\phi)$. Note that the Weinstein domains $R_{+}^{\ve}$ and $-R_{-}^{\ve}$ each have boundaries contactomorphic to $(\Gamma, \xi_{\Gamma})$. Thus, by the Giroux correspondence \cite{BHH23}, the abstract open book decompositions $(W_+, \tau_{\tilde{\mathcal{L}}^{+}})$ and $(W_-, \tau_{\tilde{\mathcal{L}}^{-}})$ may be stabilized to a common abstract open book $(W_0, \psi)$. Each of the open book stabilizations is induced by a Lefschetz fibration stabilization, and hence we have 
\[
\psi = \tau_{\mathcal{L}_0^+} \circ \tau_{\tilde{\mathcal{L}}^{+}} = \tau_{\mathcal{L}_0^-} \circ \tau_{\tilde{\mathcal{L}}^{-}}
\]
where $\mathcal{L}_0^{\pm}$ is the tuple of additional vanishing cycles in $W_0$ involved in stabilizing $(W_{\pm}, \tau_{\tilde{\mathcal{L}}^{\pm}})$ to $(W_0, \psi)$. This means that the total space
\[
|W_0; \mathcal{L}^{\pm} := \mathcal{L}_0^{\pm} \cup \tilde{\mathcal{L}}^{\pm}|
\]
is also Weinstein homotopic to $(\pm R_{\pm}^{\ve}, \lambda, \pm\phi)$. 

It follows that the folded Weinstein Lefschetz fibration $(W_0; \mathcal{L}^+, \mathcal{L}^-)$ has total space folded Weinstein homotopic to the double of $(R_{+}^{\ve}, \lambda, \phi)$ and $(-R_{-}^{\ve}, \lambda, -\phi)$, which is folded Weinstein homotopic to $(\Sigma, \lambda, \phi)$. This completes the proof. 
\end{proof}

\bibliography{references}
\bibliographystyle{amsalpha}

\end{document}